\theoremstyle{thmstyleone}%
\newtheorem{theorem}{Theorem}%  meant for continuous numbers
\newtheorem{proposition}[theorem]{Proposition}%
\newtheorem{corollary}[theorem]{Corollary}
\newtheorem{lemma}[theorem]{Lemma}
\theoremstyle{thmstyletwo}%
\newtheorem{example}{Example}%
\newtheorem{remark}{Remark}%
\theoremstyle{thmstylethree}%
\newtheorem{definition}{Definition}%
\newcommand{\mc}{\mathcal}
\newcommand{\wh}{\widehat}
\newcommand{\ol}{\overline}
\newcommand{\wt}{\widetilde}
\begin{document}

\title[Neumann property in the extended modular group]{Neumann property in the extended modular group and maximal nonparabolic subgroups of the modular group}

%%=============================================================%%
%% Prefix	-> \pfx{Dr}
%% GivenName	-> \fnm{Joergen W.}
%% Particle	-> \spfx{van der} -> surname prefix
%% FamilyName	-> \sur{Ploeg}
%% Suffix	-> \sfx{IV}
%% NatureName	-> \tanm{Poet Laureate} -> Title after name
%% Degrees	-> \dgr{MSc, PhD}
%% \author*[1,2]{\pfx{Dr} \fnm{Joergen W.} \spfx{van der} \sur{Ploeg} \sfx{IV} \tanm{Poet Laureate}
%%                 \dgr{MSc, PhD}}\email{iauthor@gmail.com}
%%=============================================================%%

\author{\fnm{Andrzej} \sur{Matra\'{s}}}\email{matras@uwm.edu.pl}
%\equalcont{These authors contributed equally to this work.}
\author[1]{\fnm{Artur} \sur{Siemaszko}}\email{artur@uwm.edu.pl}
%\equalcont{These authors contributed equally to this work.}

%\author[1,2]{\fnm{Third} \sur{Author}}\email{iiiauthor@gmail.com}
%\equalcont{These authors contributed equally to this work.}

\affil[1]{\orgdiv{Faculty of Mathematics and Computer Science}, \orgname{University of Warmia and Mazury in Olsztyn}, \orgaddress{\street{Sloneczna 54 Street}, \postcode{10-710} \city{Olsztyn}, \country{Poland}}}

%\affil[2]{\orgdiv{Department}, \orgname{Organization}, \orgaddress{\street{Street}, \city{City}, \postcode{10587}, \state{State}, \country{Country}}}

%\affil[3]{\orgdiv{Department}, \orgname{Organization}, \orgaddress{\street{Street}, \city{City}, \postcode{610101}, \state{State}, \country{Country}}}

%%==================================%%
%% sample for unstructured abstract %%
%%==================================%%

\abstract{
It is know that any Neumann subgroup of the modular group is maximal non-parabolic. The question arises as to whether these are the only maximal non-parabolic subgroups. The wild class of maximal, non-parabolic, not Neumann subgroups of the modular group was constructed by Brenner and Lyndon.  The new construction of such a class is presented.  Those groups are obtained as subgroups of elements of positive determinant of any Neumann subgroup of the extended modular group (the notion which we introduce in the paper) and in this sens they are as "close"  as possible to Neumann subgroups of the modular group.
}

\keywords{subgroups of the modular group, nonparabolic subgroups, Neumann subgroups, free product of groups, coset graph}

%%\pacs[JEL Classification]{D8, H51}

\pacs[MSC 2020]{20E06, 20F05, 20E07, 20E28}

\maketitle

\section{Introduction} The inhomogeneous modular group $PSL(2,Z)$, denoted in the paper by $\mc M$, acts on the upper half plane $\mathbb{H}$ via $g(z)=\frac{az+b}{cz+d}$ for $g=\pm\mbox{{\small $\left(\begin{array}{cc}a &b\\c&d\end{array}\right)$}}\in \mc{M}$. This group is contained as a normal subgroup of index $2$ in the extended modular group $PGL(2,Z)$ (denoted by $\wh{\mc M}$) which acts on $\mathbb{H}$ as $g(r\cdot e^{i\varphi})=\frac {a\cdot{r\cdot e^{i\varphi\cdot det\,g}}+b}{c\cdot{r\cdot e^{i\varphi\cdot det\,g}} +d}$ for $g\in {\wh{\mc M}}$. We will use the following presentation  of the extended modular group:
\begin{equation}\label{presemg}
\wh{\mc M}=\left<\omega, \varphi, \nu \;\mid\; \omega^2 = \varphi^3=\nu^2 = (\omega \nu)^2 = (\omega \varphi \nu )^2  = 1 \right>,
\end{equation}
where $\omega(z) = -\frac{1}{z}$,  $\varphi (z)=\frac{-1}{z+1}$ and $\nu(z) = -z$. Using this generators we have the following presentation of the modular group:
$$ \mc M=\left< \omega, \varphi \;\mid\; \omega^2 = \varphi^3  =1 \right>.$$
\indent Parabolic elements of $\mc M$ are conjugate to elements of the form $\tau^n$ ($n \in Z$), where $\tau=\omega\varphi$. Any subgroup $\mc M$ of a finite index contains parabolic elements, however there are nonparabolic subgroups of $\mc M$ of infinite index.\\
Neumann (\cite{N}) investigated some class of subgroups of the homogeneous modular
group $SL(2, Z)$.  As a motivation, he
refers to the work of Schmidt on the foundations of geometry, in which a
construction of some subgroup of $SL(2, Z)$ containing $\omega$ is needed.
He as the first explicitly constructed  the  continuum of such distinct groups.  Subgroups of $PSL(2,Z)$ which are complement to a subgroup conjugate to $$\mc T = \left\{\tau^n :\; n \in Z \right\}$$ are precisely projections of the groups defined in \cite{N}. Magnus in  \cite{Ma} called them \emph{ Neumann subgroups} of the modular group. In the paper \cite{T} Tretkoff  constructed Neumann subgroups with all possible structures subject to the condition that the free product decomposition  contains a group of order two and eventually a free group of even rank.  As it was proved by Stothers in \cite{S} (see also \cite{BL1}) the  odd rank of a free part of the decomposition has to be excluded.\\
Nonparabolic subgroup appear as well in other contexts. For instance,  if $F$ is a normal, free subgroup of a finite index in $SL(2,Z)$ then the commutant of $F$ is nonparabolic \cite{MD}. Also they are of interest from point of view of ergodic theory, because they are precisely groups that act in a mixing fashion on the torus $\mathbb T^{2}$ (\cite{BE}).\\
In the series of papers  we investigated the Cayley property of the distant graph (see \cite{He}) for the definition) of the projective line over $Z$. There is a natural way to generalize the notion of  being a Neumann subgroup to the extended modular group. In \cite{MS3}  we finally  showed that such defined class coincide with the set of  Cayley representations of the distant graph mentioned above, previously studied in \cite{MS1} and \cite{MS2}.

It is known that every Neumann subgroup of the modular group is a maximal nonparabolic one. Magnus (\cite{Ma}) conjectured that Neumann groups are the only maximal nonparabolic. The answer is negative.  Brennen and Lyndon in \cite{BL2} gave the first construction using symmetries of the regular tessellation of the Euclidean plane by hexagons and get subgroups with arbitrary even number orbits of $\mc T$ in their coset graphs. Jones, in the paper \cite{J1}, constructed nonparabolic and maximal subgroups in the triangle group $\Gamma \simeq C_p \star C_q$ ($p \geq 2$, $q \geq 3$). Neumann subgroups and their maximality have also been studied geometrically by Kulkarni in (\cite{K1}, \cite{K2}). He takes $\Gamma$ to be a free product of an arbitrary finite number of finite cyclic groups of prime order (\cite{K1}). Some generalisations of results connected with maximal nonparabolic groups can be find in (\cite{J2}).

  In the paper we present the new construction of a wide class of maximal nonparabolic subgroups of  $\mc M$, which are not Neumann groups.  We get them in a completely different way than Brennen and Lyndon.
  First (analogously as in the case of $\mc M$) we introduce the notion of a Neumann subgroup of the extended modular  as a maximal completion of a subgroup conjugate to a maximal isotropic subgroup $$\wh{\mc T}=\{\tau^n\nu^\epsilon:\;n\in Z,\,\epsilon\in\{0,1\}\}.$$
Then we  receive our subgroups  as the ``modular part''(means intersection with $\mc M$) of the Neumann subgroups of  $\wh{\mc M}$ which are not contained in $\mc M$.  All our groups have two orbits of $\mc T$ in their coset graphs. However the constructions of Brenner and Lyndon and our give disjoint classes.

In Section~2 we consider anisotropy subgroups of the extended modular group as analogue of nonparabolic subgroups of the modular group. It is shown that every Neumann subgroup of the extended modular group is a maximal anisotropy subgroup. We also characterize Neumann subgroups of the extended modular group as those generated by special involution of $Z$ and using this involution we find the specific presentation which we use in the next section to exhibit the structure of investigated subgroups.

  In Section~3 the main tool in our analysis of the structure nonparabolic subgroups of $\wh{\mc M}$ is a coset graph of a subgroup introduced in \cite{Sc} and used in \cite{BL1}, \cite{BL2}. Because the complete set of representatives of a Neumann subgroup $\wh{\mc S} \subseteq \wh{\mc M}$ are $\tau^{n}$ and $(\tau \nu)^{n}$, we take their cosets as vertices of the coset graph of the group $\wh{\mc S}$. The set of directed edges is determined by actions of $\omega$, $\varphi$ and $\nu$ on vertices of a graph. In the paper these actions are denoted by A, B and  V respectively and comes from the definition of Neumann group as an abstract quadruple (\cite{BL1}) with suitable permutations such, that $C=AB$ acts semi-transitively (Theorem \ref{Nsym}).
  The coset graph of a Neumann subgroup of $\mc M$ is a directed graph and consists of the Eulerian path determined by the action of $C=AB$. The action of $C$ on the coset graph of a Neumann subgroup of $\wh{\mc M}$ has precisely two infinite orbits, i.e. $\{\wh{\mc M}\tau^n:\;n\in Z\}$, $\{\wh{\mc M}\tau^n\nu:\;n\in Z\}$, and in this sense the graph is ``quasi-Eulerian''. Another isomorphic coset graph  of a subgroup $\wh{\mc S} \subseteq \wh{\mc M}$ can be obtained using as edges A, $B^{-1}$ and V. The involution $AV$ determines an isomorphism of these two coset graphs preserving the structure of ''quasi-Eulerian'' paths. In the paper \cite{BL1} coset graph of a Neumann group is used to determine independent set of generators. It was possible because there is one to one labeling of edges by generators of a group such, that edges in any vertex determines a relation  between generators. In other words the coset graph of a subgroup can be interpreted as a presentation of this subgroup. We characterize Neumann subgroups of the extended modular group as those which have the special presentation with $\{\Sigma_n:\;n\in Z\}$ as the set of generators  (4. of Proposition~\ref{prez}). In the labeling of the coset graph  of subgroup $\wh{\mc S}$ each generator $\Sigma_{n}$ appears twice so we can not use him for finding independent generators. The appropriate graph $\Gamma (\wh{\mc S})$ we obtain as a homomorphic image of the coset graph  by the action of the isomorphism $AV$. In the Lemma \ref{isograph} we prove that graphs $\Gamma (\wh{\mc S})$ and the coset graph $\Gamma_{ \mc M}(\mc S)$ of the group $S=\wh{\mc S} \cap \mc M$ in $\mc M$ are isomorphic. Following \cite{BL1} we found an independent set of generators of $\wh{\mc S}$ from the graph $\wt{\Gamma}(\wh{\mc S})$ and of $\mc S$ from $\Gamma_{ \mc M}(\mc S)$ respectively.

\section{Neumann subgroups of the extended modular group}
%\subsection{Anisotropy subgroups of the extended modular group }
In this section we introduce the notion of Neumann groups in $\wh{\mc M}$ and provide some equivalent characterizations. Recall that a Neumann subgroup of $\mc M$ is defined as a complement of $\mc T$. It is proved in \cite{BL1} that conjugates of $\mc T$ are precisely maximal parabolic groups and that a Neumann subgroup is a complement of every maximal parabolic one. Since parabolic elements are all in $\mc M$ we cannot literally transpose the definition  of a Neumann groups to $\wh{\mc M}$.
 Note that $\mc T=\mc M_{(\infty)}$ is an isotropy group of $\infty$, hence every maximal parabolic subgroup of $\mc M$ is equal to $\mc M_{(p)}$, an isotropy group of  some $p\in Q^*$. For our purpose the notion of isotropy subgroups in $\wh{\mc M}$ stands for a generalization of maximal parabolic subgroups in $\mc M$.
\begin{remark}
It is not difficult to observe that $\widehat{\mc S}<\wh{\mc M}$ is an isotropy subgroup of some $p\in Q^*$ iff $\widehat{\mc S}$ is a conjugation of the isotropy subgroup $\wh{\mc T}=\wh{\mc M}_{(\infty)}=\langle\tau, \nu\rangle$ of $\infty$.
\end{remark}
\begin{definition} A \emph{Neumann subgroup} of $\wh{\mc M}$ is a complement $\widehat{\mc S}$ of $\wh{\mc T}$, i.e. $\widehat{\mc S}\,\wh{\mc T}=\wh{\mc M}$ and $\widehat{\mc S}\cap\wh{\mc T}=1$.
\end{definition}
Since $\wh{\mc T}=\{\tau^n\nu^\epsilon:\;n\in Z,\,\epsilon\in\{0,1\}\}$ the property that $\tau^n, \tau^n\nu$, $n\in Z$,  form a complete system of distinct  right coset representatives of $\widehat{\mc S}$ in $\wh{\mc M}$ is an equivalent definition of Neumann subgroups of $\wh{\mc M}$.

In original Neumann's paper \cite{N} he investigates subgroups of $SL(2,Z)$ defined by two conditions:
\begin{enumerate}
\item the subgroup contains matrices with any ordered pair $(a,c)$ of relatively prime integers as the first column;
\item no two matrices of the subgroup have the same first column.
\end{enumerate}
Consider the same definition in $GL(2,Z)$. In order to be able to project this notion onto $\wh{\mc M}\simeq PGL(2,Z)$ we additionally assume that if a vector $v$ corresponds to a matrix $A$, then $-v$ corresponds to $-A$. Then our definition is equivalent to Neumann's subgroups. Indeed, the condition 1. is equivalent to $\widehat{\mc S}\,\wh{\mc T}=\wh{\mc M}$ and the condition 2. is equivalent to  $\widehat{\mc S}\cap\wh{\mc T}=1$.

In \cite{BL1}  there are proved some properties of maximal parabolic subgroups of $M$ that can be almost automatically transfer to isotropy subgroups of $\wh{\mc M}$:
\begin{lemma}\label{basixep}
\begin{enumerate}
\item The following are equivalent:
\begin{enumerate}
\item $\widehat{\mc S}$ acts transitively on $Q^*$.
\item $\widehat{\mc S}\,\wh{\mc P}=\wh{\mc M}$ for some isotropy subgroup $\wh{\mc P}$.
\item $\widehat{\mc S}\,\wh{\mc P}=\wh{\mc M}$ for all isotropy subgroup $\wh{\mc P}$.
\end{enumerate}
\item If $\widehat{\mc S}$ acts transitively on $Q^*$ and $\widehat{\mc S}\cap\wh{\mc P}=1$ for some isotropy subgroup $\wh{\mc P}$, then $\widehat{\mc S}\cap\wh{\mc P}=1$ for all isotropy subgroup $\wh{\mc P}$
\end{enumerate}
\end{lemma}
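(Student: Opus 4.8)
The plan is to reduce both parts to the orbit–stabilizer correspondence for the natural transitive action of $\wh{\mc M}$ on $Q^*$. By the preceding Remark every isotropy subgroup is a conjugate of $\wh{\mc T}=\wh{\mc M}_{(\infty)}$, and since $\wh{\mc M}$ acts transitively on $Q^*$ these conjugates are exactly the point stabilizers $\wh{\mc M}_{(p)}$, $p\in Q^*$. Fixing such a $p$ with stabilizer $\wh{\mc P}$, the map $g\wh{\mc P}\mapsto g(p)$ identifies the left coset space $\wh{\mc M}/\wh{\mc P}$ with $Q^*$ as $\wh{\mc M}$-sets; I would set this up first, citing the analogous facts for $\mc M$ in \cite{BL1}.

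The heart of part~1 is the single, base-point-free observation that for \emph{any} isotropy subgroup $\wh{\mc P}=\wh{\mc M}_{(p)}$,
$$\widehat{\mc S}\text{ acts transitively on }Q^*\iff \widehat{\mc S}\,\wh{\mc P}=\wh{\mc M}.$$
I would prove this by unwinding definitions: $g(p)$ lies in the $\widehat{\mc S}$-orbit of $p$ iff $g(p)=s(p)$ for some $s\in\widehat{\mc S}$, equivalently $s^{-1}g\in\wh{\mc P}$, equivalently $g\in\widehat{\mc S}\,\wh{\mc P}$; so $\widehat{\mc S}$ is transitive precisely when $\widehat{\mc S}\,\wh{\mc P}$ exhausts $\wh{\mc M}$. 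Since the left-hand side does not depend on the chosen point, the three conditions collapse at once: (a) gives $\widehat{\mc S}\,\wh{\mc P}=\wh{\mc M}$ for every isotropy subgroup, hence (c); (c)$\Rightarrow$(b) is trivial; and (b) yields (a) by applying the equivalence to the single $\wh{\mc P}$ at hand.

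For part~2 I would use that $\widehat{\mc S}\cap\wh{\mc P}$ is exactly the stabilizer of $p$ inside $\widehat{\mc S}$, and that in a transitive action point stabilizers are mutually conjugate by elements of the acting group: if $\widehat{\mc S}$ is transitive and $q=s(p)$ with $s\in\widehat{\mc S}$, then $\widehat{\mc S}\cap\wh{\mc M}_{(q)}=s\,(\widehat{\mc S}\cap\wh{\mc M}_{(p)})\,s^{-1}$, so triviality of the intersection for one isotropy subgroup forces it for all. The main obstacle is precisely the hinge of this argument: a priori the isotropy subgroups are only $\wh{\mc M}$-conjugate (via the Remark), whereas transporting the intersection $\widehat{\mc S}\cap\wh{\mc P}$ requires conjugacy by an element of $\widehat{\mc S}$ itself. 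This is exactly where the transitivity hypothesis on $\widehat{\mc S}$ is indispensable — it guarantees that any two points of $Q^*$ are related by some $s\in\widehat{\mc S}$, upgrading the ambient $\wh{\mc M}$-conjugacy to the $\widehat{\mc S}$-conjugacy actually needed. I would state this reduction explicitly rather than leave it implicit; the remaining verifications are routine.
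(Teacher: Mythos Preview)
Your argument is correct and is exactly the standard orbit--stabilizer reduction one would expect. The paper does not actually supply a proof of this lemma: it merely remarks that the analogous properties of maximal parabolic subgroups of $\mc M$ proved in \cite{BL1} ``can be almost automatically transfer[red] to isotropy subgroups of $\wh{\mc M}$'' and then states the lemma without proof. Your write-up is precisely that automatic transfer spelled out, so there is nothing to compare --- you have filled in what the authors left implicit.
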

\begin{corollary}\label{np}
A Neumann subgroup of $\wh{\mc M}$ is a complement to every isotropy subgroup of $\wh{\mc M}$.
\end{corollary}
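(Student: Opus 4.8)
The plan is to obtain the corollary as an immediate consequence of Lemma~\ref{basixep}, so almost no new work is required; the content has already been packaged into that lemma. The one preliminary observation I would make explicit is that $\wh{\mc T}$ is itself an isotropy subgroup: by the preceding Remark it is $\wh{\mc M}_{(\infty)}=\langle\tau,\nu\rangle$, the isotropy subgroup of $\infty\in Q^*$. Thus the two defining properties of a Neumann subgroup $\widehat{\mc S}$, namely $\widehat{\mc S}\,\wh{\mc T}=\wh{\mc M}$ and $\widehat{\mc S}\cap\wh{\mc T}=1$, are precisely the complement conditions relative to one particular isotropy subgroup, and the task is to upgrade ``one'' to ``all''.

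First I would handle the product condition. The equality $\widehat{\mc S}\,\wh{\mc T}=\wh{\mc M}$ is exactly statement (b) of part~1 of Lemma~\ref{basixep} for the isotropy subgroup $\wh{\mc P}=\wh{\mc T}$. Invoking the equivalence (b)$\Leftrightarrow$(a), I conclude that $\widehat{\mc S}$ acts transitively on $Q^*$. Feeding this back through (a)$\Rightarrow$(c) then gives $\widehat{\mc S}\,\wh{\mc P}=\wh{\mc M}$ for \emph{every} isotropy subgroup $\wh{\mc P}$.

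Next I would handle the intersection condition. I now have in hand both hypotheses of part~2 of Lemma~\ref{basixep}: $\widehat{\mc S}$ acts transitively on $Q^*$ (just established), and $\widehat{\mc S}\cap\wh{\mc P}=1$ holds for the particular isotropy subgroup $\wh{\mc P}=\wh{\mc T}$ (by the definition of a Neumann subgroup). Part~2 then yields $\widehat{\mc S}\cap\wh{\mc P}=1$ for all isotropy subgroups $\wh{\mc P}$. Combining the two conclusions shows that $\widehat{\mc S}$ is simultaneously a complement, both in the product and in the intersection sense, to every isotropy subgroup of $\wh{\mc M}$, which is the assertion. There is no real obstacle to overcome here: the only point requiring any care is the identification of $\wh{\mc T}$ as a legitimate isotropy subgroup so that the Neumann conditions feed directly into Lemma~\ref{basixep}; the transitivity on $Q^*$ does all the remaining work and is supplied by that lemma.
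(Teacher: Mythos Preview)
Your proposal is correct and matches the paper's approach: the paper states Corollary~\ref{np} immediately after Lemma~\ref{basixep} without any explicit proof, treating it as an immediate consequence of that lemma. Your write-up simply makes explicit the two applications of parts~1 and~2 of Lemma~\ref{basixep} that the paper leaves to the reader.
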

\begin{definition}
A subgroup of $\wh{\mc M}$ is called \emph{anisotropy subgroup} if it contains no elements of isotropy subgroups but the unity.
\end{definition}
As a consequence of Lemma~\ref{basixep} we immediately get the following.
\begin{proposition}\label{Nmnp}
Every Neumann subgroup of $\wh{\mc M}$ is a maximal anisotropy subgroup of $\wh{\mc M}$.
\end{proposition}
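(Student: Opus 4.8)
The plan is to split the statement into two claims: that a Neumann subgroup $\wh{\mc S}$ is itself an anisotropy subgroup, and that no anisotropy subgroup can properly contain it. The first claim is almost immediate from Corollary~\ref{np}, which tells us that $\wh{\mc S}$ is a complement of \emph{every} isotropy subgroup; in particular $\wh{\mc S}\cap\wh{\mc P}=1$ for each isotropy subgroup $\wh{\mc P}$. Since every nontrivial element belonging to some isotropy subgroup would then have to lie in the corresponding trivial intersection with $\wh{\mc S}$, the group $\wh{\mc S}$ contains no nontrivial element of any isotropy subgroup, which is exactly the definition of being anisotropy.

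For the maximality, I would exploit the defining complement structure of $\wh{\mc S}$. From $\wh{\mc S}\,\wh{\mc T}=\wh{\mc M}$ together with $\wh{\mc S}\cap\wh{\mc T}=1$ I would first record the unique-factorization property: every $g\in\wh{\mc M}$ can be written as $g=st$ with $s\in\wh{\mc S}$ and $t\in\wh{\mc T}$, the factors being unique because $s_2^{-1}s_1=t_2t_1^{-1}\in\wh{\mc S}\cap\wh{\mc T}=1$. Now let $\wh{\mc H}$ be any anisotropy subgroup with $\wh{\mc S}\subseteq\wh{\mc H}$, and take an arbitrary $h\in\wh{\mc H}$. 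Factoring $h=st$ as above, I would observe that $t=s^{-1}h$ lies in $\wh{\mc H}$, since $s\in\wh{\mc S}\subseteq\wh{\mc H}$ and $h\in\wh{\mc H}$. But $t\in\wh{\mc T}$, and $\wh{\mc T}$ is an isotropy subgroup, so the anisotropy of $\wh{\mc H}$ forces $t=1$. Hence $h=s\in\wh{\mc S}$, which gives $\wh{\mc H}\subseteq\wh{\mc S}$ and therefore $\wh{\mc H}=\wh{\mc S}$.

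I do not expect a serious obstacle here; the argument is formal once the complement property is written out. The only points requiring care are to justify unique factorization from the two complement conditions rather than to assume it, and to notice that the residual factor $t$ automatically lands in the fixed isotropy subgroup $\wh{\mc T}$, so that the anisotropy hypothesis on $\wh{\mc H}$ can be applied directly without passing to conjugates. This parallels the classical proof that Neumann subgroups of $\mc M$ are maximal nonparabolic, with isotropy subgroups of $\wh{\mc M}$ taking over the role of maximal parabolic subgroups of $\mc M$.
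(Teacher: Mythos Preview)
Your argument is correct and is precisely the natural way to unpack what the paper leaves implicit: the paper states the proposition as an immediate consequence of Lemma~\ref{basixep} (via Corollary~\ref{np}) without spelling out a proof, and your two-step argument (anisotropy from $\wh{\mc S}\cap\wh{\mc P}=1$ for all isotropy $\wh{\mc P}$, maximality from the unique factorization $\wh{\mc M}=\wh{\mc S}\,\wh{\mc T}$) is exactly the intended unpacking.
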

Inspired by \cite{BL1}, we present the characterization of Neumann subgroups of $\wh{\mc M}$ in terms of permutations of some abstract set. By a \emph{quadruple} $(\Omega, A,B,V)$ we mean one where $\Omega=\Omega_0\cup\Omega_1$ with $\Omega_0$, $\Omega_1$ infinite, disjoint sets and $A$, $B$, $V$ are permutations of $\Omega$ such that
$$A^2=B^3=V^2=(AV)^2=(ABV)^2=1,$$
and moreover $\Omega_0V=\Omega_1$. Here, following \cite{BL1}, by the product $GH$   in Sym$(\Omega)$ we admit $H\circ G$ and for $G\in\,\mbox{Sym}(\Omega)$ and $p\in\Omega$, $pG$ stands for $G(p)$.\\
A  quadruple $(\Omega, A,B,V)$ is  called \emph{semi-transitive} if $C=AB$ acts transitively on $\Omega_j$, $j=0,1$.

Using the presentation of $\wh{\mc M}=\langle\omega, \omega\tau, \nu\rangle$ we can see that the assignment
\begin{equation}\label{quadruple}
A=\omega\Phi,\; B=(\omega\tau)\Phi,\; V=\nu\Phi\in\mbox{Sym}(\Omega)
\end{equation}
gives an injection of $\wh{\mc M}$ into Sym$(\Omega)$ that we also denote by $\Phi$.
\begin{theorem}\label{Nsym}
The conjugacy classes of Neumann groups in $\wh{\mc M}$ (that are not contained in $\mc M$) are in one-to-one correspondence with the isomorphism classes of semi-transitive quadruples $(\Omega, A,B,V)$ with $\Omega_j$ infinite countable.
\end{theorem}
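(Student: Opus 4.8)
The plan is to obtain the bijection as a specialization of the classical dictionary between conjugacy classes of subgroups of a group and isomorphism classes of its transitive permutation representations, applied to the presentation (\ref{presemg}) of $\wh{\mc M}$ and cut down by the Neumann condition. Throughout I use the injection $\Phi$ of (\ref{quadruple}), under which $C=AB=\tau\Phi$, and I recall that an isomorphism of quadruples is exactly a bijection $\Omega\to\Omega'$ intertwining $A,B,V$, i.e. an isomorphism of the underlying $\wh{\mc M}$-sets.

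For the forward direction I would start from a Neumann subgroup $\wh{\mc S}$ not contained in $\mc M$, let $\wh{\mc M}$ act on $\Omega:=\wh{\mc S}\backslash\wh{\mc M}$, and push $\omega,\varphi,\nu$ to $A,B,V$; the relations of (\ref{presemg}) give $A^2=B^3=V^2=(AV)^2=(ABV)^2=1$ for free, so only the partition and the orbit structure remain to be read off. Here the Neumann hypothesis enters directly: since $\tau^n$ and $\tau^n\nu$ form a complete system of distinct coset representatives, $\wh{\mc T}=\langle\tau,\nu\rangle\cong D_\infty$ acts simply transitively on $\Omega$. Setting $\Omega_0=\{\wh{\mc S}\tau^n\}$ and $\Omega_1=\{\wh{\mc S}\tau^n\nu\}$ then displays $\Omega$ as the disjoint union of the two $C$-orbits, each a regular $\langle\tau\rangle\cong Z$ orbit and hence countably infinite, with $\Omega_0V=\Omega_1$ because $\wh{\mc S}\tau^n\cdot\nu\Phi=\wh{\mc S}\tau^n\nu$. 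Thus the associated quadruple is semi-transitive with $\Omega_j$ infinite countable, and conjugate subgroups give isomorphic quadruples.

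Conversely, given a semi-transitive quadruple I would use the relations to extend $\omega\mapsto A$, $\varphi\mapsto B$, $\nu\mapsto V$ to a homomorphism $\rho:\wh{\mc M}\to\mathrm{Sym}(\Omega)$, fix $p_0\in\Omega_0$, and set $\wh{\mc S}=\mathrm{Stab}_\rho(p_0)$. Transitivity of $C$ on the infinite set $\Omega_0$ forces $\langle C\rangle$ to act freely there (a transitive $Z$-action on an infinite set has trivial stabilizers), so the points $p_0C^n$ are distinct and exhaust $\Omega_0$; applying $V$ and using $\Omega_0V=\Omega_1$ shows that the $p_0C^nV$ are distinct and exhaust $\Omega_1$. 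Translating these back into cosets yields exactly that $\tau^n,\tau^n\nu$ are the distinct right cosets of $\wh{\mc S}$, so $\wh{\mc S}$ is a Neumann subgroup. The two constructions are visibly inverse once the base point is retained, and forgetting the base point is precisely what converts the pointed bijection into a correspondence between conjugacy classes on the left and isomorphism classes of quadruples on the right; I would remark that an isomorphism may interchange $\Omega_0$ and $\Omega_1$, matching the freedom to pick $p_0$ in either orbit (equivalently, to conjugate by $\nu$).

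The hard part will be pinning down the clause \emph{not contained in $\mc M$}. From $(AV)^2=1$ one gets that $A$ commutes with $V$, and the determinant homomorphism identifies $\rho(\mc M)$ with $\langle A,B\rangle$; since every $\langle A,B\rangle$-orbit is a union of $C$-orbits, $\langle A,B\rangle$ is either transitive on $\Omega$ or preserves the partition $\{\Omega_0,\Omega_1\}$. Tracking the base point through $\rho$ shows $\wh{\mc S}\subseteq\mc M$ exactly in the partition-preserving case, and $\wh{\mc S}\not\subseteq\mc M$ exactly when $\langle A,B\rangle$ is transitive. I expect the delicate step to be verifying that the backward map lands among subgroups not contained in $\mc M$, i.e. separating off the partition-preserving quadruples, which are precisely the ones attached to Neumann subgroups of $\mc M$; this is the point at which the semi-transitive quadruples relevant to the statement must be matched with the genuinely ``mixing'' representations.
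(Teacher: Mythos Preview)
Your argument is correct and follows essentially the same route as the paper: build the quadruple from the right coset action, read off semi-transitivity from the fact that $\tau^n,\tau^n\nu$ are a full set of distinct representatives, and go back via the stabilizer of a chosen point. The paper carries out the conjugacy step concretely (writing every conjugate as $\tau^{-k}\nu^i\wh{\mc S}\nu^i\tau^k$ and exhibiting the bijection of coset spaces), but this is the same content as your ``forgetting the base point'' remark.

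Where you diverge from the paper is in treating the parenthetical ``not contained in $\mc M$'' as the hard part. The paper's own proof of this theorem does \emph{not} address that clause at all: the argument as written produces a bijection between conjugacy classes of \emph{all} Neumann subgroups of $\wh{\mc M}$ and isomorphism classes of semi-transitive quadruples, and the distinction you are worrying about is deferred to Theorem~\ref{Ncg}, where it is phrased as connectedness of $\Gamma(\wh{\mc S})$. Your analysis of this point (that $\wh{\mc S}\subset\mc M$ iff $\langle A,B\rangle$ preserves the partition $\{\Omega_0,\Omega_1\}$, iff $\langle A,B\rangle$ fails to be transitive on $\Omega$) is correct and in fact slightly sharper than what the paper does at this stage; but you should not regard it as a gap in the main bijection, since the paper evidently intends the parenthetical as a side remark rather than an extra hypothesis to be matched on the quadruple side.
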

\begin{proof}
Let $\widehat{\mc S}<\wh{\mc M}$ be a Neumann group, hence a compliment to $\wh{\mc T}$. Let $\Omega_j=\{\widehat{\mc S}\tau^n\nu^j\}$, $j=0,1$, and $\Omega=\Omega_0\cup\Omega_1$. Then the action of $\wh{\mc M}$ on $\Omega$ by right multiplication defines a map
\begin{equation}\label{Phi}
\Phi:\wh{\mc M}\longrightarrow \mbox{Sym}(\Omega).
\end{equation}
Put $$A=\omega\Phi,\; B=(\omega\tau)\Phi,\; V=\nu\Phi.$$ Then we have
$$(\widehat{\mc S}\tau^n\nu^j)\,C=(\widehat{\mc S}\tau^n\nu^j)(\tau\Phi)
=\widehat{\mc S}\tau^{n+(-1)^j}\nu^j$$
and
$$(\widehat{\mc S}\tau^n\nu^j)\,V=(\widehat{\mc S}\tau^n\nu^j)(\nu\Phi)=\widehat{\mc S}\tau^n\nu^{j+1}.$$ We have found the corresponding quadruple  $(\Omega, A,B,V)$. \\
Since $\widehat{\mc S}\,\wh{\mc T}=\wh{\mc M}$, every conjugate of $\widehat{\mc S}$ is of the form $\tau^{-k}\nu^i\widehat{\mc S}\nu^i\tau^k$, $i=0,1$. The cosets of this conjugate are of the form  $\tau^{-k}\nu^i\widehat{\mc S}\tau^{l}\nu^j$, $i,j=0,1$ and make up the set $\Omega'$. Therefore the correspondence
$$\Omega'\ni\tau^{-k}\nu^i\widehat{\mc S}\tau^{l}\nu^{j}\longmapsto
 \widehat{\mc S}\tau^{l}\nu^j\in\Omega$$
 induces the isomorphism between appropriate quadruples.

 Now let $(\Omega, A,B,V)$ be a given quadruple. Consider the injection defined by \ref{quadruple}. Choose an element $p\in\Omega$ and define
 $$\widehat{\mc S}=\{\alpha\in\wh{\mc M}:\;\wh p(\alpha\Phi)=\wh p\}.$$
 We call $\wh p$ a \emph{stabilized element} of the quadruple of $\wh{\mc S}$.
 Then $\wh p(\tau^k\Phi)=\wh pC^k\neq\wh  p$, provided $k\neq0$, since $C$ is transitive on both of $\Omega_j$. Since $\Omega_0V=\Omega_1$ and $\Omega_jC=\Omega_j$, $\wh p\neq \wh p(\tau^k\nu\Phi)=(\wh pV)C^{-k}$, $k\in Z$. Therefore $\widehat{\mc S}\cap\wh{\mc T}=1$. \\
 Moreover if $\alpha\in\wh{\mc M}$ then, by the semi-transitivity property,
 $\wh p(\alpha\Phi)$ equals to $\wh pC^k=\wh p(\tau^k\Phi)$ or $\wh pC^kV=\wh p(\tau^k\nu\Phi)$, whence $\wh p=\wh p(\alpha\tau^{-k}\Phi)$ or $\wh p=\wh p(\alpha\tau^{-k}\nu\Phi)$ and we get that $\alpha\in\widehat{\mc S}\tau^k$ or $\alpha\in\widehat{\mc S}\tau^k\nu$. Therefore
 $\widehat{\mc S}\,\wh{\mc T}=\wh{\mc M}$ and $\widehat{\mc S}$ is a Neumann subgroup.  It is easy to see that different choices of $\wh p\in\Omega$ give  conjugate Neumann subgroups.
\end{proof}
From now on we will identify subgroups of $\wh{\mc M}$ with their images by $\Phi$.
\begin{remark}\label{inwolucja}
With a given Neumann subgroup of $\mc M$ there is associated some involution of $Z$, which was discovered by Magnus \cite{Ma} and Tretkoff \cite{T}. Independently, in more general case of anisotropy subgroups of $\wh{\mc M}$, this involution was explored in \cite{MS1}, \cite{MS2} and \cite{MS3}. Using this involution we can construct a set of generators for an anisotropy subgroup without using the Reidemeister-Schreier process. Because we will use those generators as well as an involution, we give  definitions here.
\end{remark}

Let $(\Omega, A,B,V)$ be a quadruple of $\wh{\mc S}$ and  $\wh p\in\Omega$ be a stabilized element. Let the maps $\iota:Z\longrightarrow Z$ and $\delta_.:Z\longrightarrow\{-1,1\}$ be determined by
$$\wh pC^nA=\wh pC^{\iota(n)}V^\frac{1-\delta_n}{2},\;n\in Z.$$
Define
$$\Sigma_n=C^nAV^\frac{1-\delta_n}{2}C^{-\iota(n)}\in\wh{\mc S},\;\;\;n\in Z.$$
\begin{proposition}\label{prez}
Let $\iota$ and $\Sigma_n$ be defined as above. Then
\begin{enumerate}
\item $\iota$ is an involution on $Z$ and $\delta_{\iota(n)}=\delta_n$;
\item if  $pC^kV^{\epsilon_k}$, $pC^lV^{\epsilon_l}$ and $pC^mV^{\epsilon_m}$  are three vertices in the same $B$-cycle then $\delta_k\delta_l\delta_m=1$ (equivalently, $\delta_{\pi(l)}\delta_{\pi(m)}-\delta_{\pi(k)}=0$ for every $\pi\in S_3$);
\item \begin{equation}\label{eqiota}\iota(\iota(n)+\delta_n)=\iota(n-1)-\delta_{n-1}\end{equation}
for every $n\in Z$;
\item \begin{equation}\label{prN}
\langle\Sigma_n\; \mid \;\Sigma_n\Sigma_{\iota(n)+\delta_n}\Sigma_{\iota(n-1)}=1,\;n\in Z\rangle
 \end{equation}
is a presentation of $\wh{\mc S}$.
 \end{enumerate}
\end{proposition}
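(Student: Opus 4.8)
The common engine for all four parts is to describe the quadruple action in the coordinates supplied by the complete transversal of Theorem~\ref{Nsym}. I write each vertex as $\wh pC^aV^b$ with $a\in Z$, $b\in\{0,1\}$. The quadruple relations give three algebraic facts: $(AV)^2=1$ yields $AV=VA$; $(ABV)^2=1$ together with $C=AB$ yields $VC=C^{-1}V$; and $B=AC$. Substituting these into the defining relation $\wh pC^nA=\wh pC^{\iota(n)}V^{\frac{1-\delta_n}{2}}$ one obtains, for every vertex,
$$\wh pC^aV^bA=\wh pC^{\iota(a)}V^{\frac{1-\delta_a}{2}+b},\qquad \wh pC^aV^bC=\wh pC^{a+(-1)^b}V^b,\qquad \wh pC^aV^bV=\wh pC^aV^{b+1},$$
and consequently
$$\wh pC^aV^bB=\wh pC^{\,\iota(a)+\delta_a(-1)^b}V^{\frac{1-\delta_a}{2}+b}.$$
I use repeatedly that $C$ restricted to each $\Omega_j$ generates a cyclic group acting transitively on the infinite set $\Omega_j$, hence freely, so $\wh pC^aV^b=\wh pC^{a'}V^{b'}$ forces $a=a'$ and $b\equiv b'\pmod2$.

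Parts 1--3 are then read off from these formulas. For part 1 I apply $A$ to the defining relation and use $A^2=1$ and $AV=VA$: the left side returns $\wh pC^n$ while the right side becomes $\wh pC^{\iota(\iota(n))}V^{\frac{1-\delta_{\iota(n)}}{2}+\frac{1-\delta_n}{2}}$; since $\wh pC^n\in\Omega_0$ the $V$-exponent is even, forcing $\delta_{\iota(n)}=\delta_n$, and freeness then gives $\iota(\iota(n))=n$. Parts 2 and 3 both come from $B^3=1$. For part 2, along a genuine $B$-cycle the second coordinate must return to its value after three steps; by the formula for $B$ each step adds $\frac{1-\delta_a}{2}$, where $a$ runs through the three first coordinates $k,l,m$, so $\frac{1-\delta_k}{2}+\frac{1-\delta_l}{2}+\frac{1-\delta_m}{2}\equiv0\pmod2$, which is precisely $\delta_k\delta_l\delta_m=1$; the symmetric reformulation is immediate because each $\delta\in\{-1,1\}$. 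For part 3 I evaluate $B^2=B^{-1}$ at $\wh pC^n$. One side is $B^{-1}(\wh pC^n)=\wh pC^{\iota(n-1)}V^{\frac{1-\delta_{n-1}}{2}}$; the other, with $\alpha=\iota(n)+\delta_n$, is $B^2(\wh pC^n)=\wh pC^{\,\iota(\alpha)+\delta_\alpha\delta_n}V^{\frac{1-\delta_\alpha}{2}+\frac{1-\delta_n}{2}}$. Matching the two coordinates gives $\delta_\alpha\delta_n=\delta_{n-1}$ and $\iota(\alpha)+\delta_\alpha\delta_n=\iota(n-1)$, and eliminating the sign produces $\iota(\iota(n)+\delta_n)=\iota(n-1)-\delta_{n-1}$, i.e.\ (\ref{eqiota}).

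For part 4 the plan is a Reidemeister--Schreier (coset-graph) computation with the transversal $\wh{\mc T}=\{\tau^n\nu^\epsilon\}$ and the generating set $\{\omega,\tau,\nu\}$, for which the relators of $\wh{\mc M}$ read $\omega^2$, $(\omega\tau)^3$, $\nu^2$, $(\omega\nu)^2$, $(\tau\nu)^2$. Since $C$ and $V$ preserve the transversal, the $\tau$- and $\nu$-edges can be taken as tree edges (trivial generators) once $(\tau\nu)^2$ and $\nu^2$ are used, so the only surviving generators are the $\omega$-edges; the one based at $\tau^n$ is exactly $\Sigma_n=\tau^n\omega\nu^{\frac{1-\delta_n}{2}}\tau^{-\iota(n)}$, and the one based at $\tau^n\nu$ equals the same element because $\omega$ and $\nu$ commute. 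The relators $\omega^2$ and $(\omega\nu)^2$ merge these two lifts and identify $\Sigma_{\iota(n)}$ with $\Sigma_n^{-1}$, which is precisely the effect of the involution $AV$ that folds the coset graph onto $\wt\Gamma(\wh{\mc S})$, the graph in which each $\Sigma_n$ occurs once. Finally, tracing $(\omega\tau)^3=\varphi^3$ from $\tau^n$ and using parts 1--3 to identify the three $\omega$-edges met as $\Sigma_n$, $\Sigma_{\iota(n)+\delta_n}$ and $\Sigma_{\iota(n-1)}$ yields the relation $\Sigma_n\Sigma_{\iota(n)+\delta_n}\Sigma_{\iota(n-1)}=1$. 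The step I expect to be the main obstacle is the bookkeeping that closes this argument: one must check that after the $AV$-fold the only surviving relations are these triangles, i.e.\ that $\omega^2$, $(\omega\nu)^2$, $(\tau\nu)^2$, $\nu^2$ contribute nothing beyond the edge identifications $\Sigma_{\iota(n)}=\Sigma_n^{-1}$ and the triviality of the $\tau$- and $\nu$-edges. Setting up the spanning tree so that exactly the $\omega$-edges remain, and verifying that each $(\omega\tau)^3$-cell closes using precisely the identities of parts 1--3, is where the care lies; the rest is the routine coordinate computation established above.
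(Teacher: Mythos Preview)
Your proposal is correct and follows essentially the same approach as the paper. The paper also writes out the $A$-, $B$-, $C$-, $V$-actions in the $(a,b)$-coordinates, deduces parts 1--3 by comparing coordinates (invoking the anisotropy condition $\wh{\mc S}\cap\wh{\mc T}=1$, which is exactly your ``freeness'' of the $C$-action), and then runs Reidemeister--Schreier on the transversal $\{C^n,\,C^nV\}$ for part 4; the only cosmetic difference is that for part 3 the paper uses the identity $C^{-1}AC^{-1}=BA$ rather than $B^2=B^{-1}$, which yields the same comparison one step shifted.
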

\begin{proof}
1. We have
$$\wh pC^n=\wh pC^nA^2=\wh pC^{\iota(n)}V^{\frac{1-\delta_n}{2}}A=
\wh pC^{\iota(\iota(n))}V^{\frac{1-\delta_{\iota(n)}}{2}}V^{\frac{1-\delta_n}{2}},$$
hence
$$C^{\iota(\iota(n))}V^{\frac{1-\delta_{\iota(n)}}{2}}V^{\frac{1-\delta_n}{2}}C^{-n}\in\wh{\mc S}.$$
Since $\wh{\mc S}$ is an anisotropy subgroup, it follows the thesis.\\
2. Since
$$\wh pC^nV^{\epsilon}B=\wh pC^nAV^{\epsilon}C=\wh pC^{\iota(n)}V^{\frac{1-\delta_n}{2}}V^{\epsilon}C=
\wh pC^{\iota(n)+(1-2\epsilon)\delta_n}V^{\frac{1-\delta_{\iota(n)}}{2}+\epsilon},$$
the action of $B$ on $\wh pC^n$  changes the orbit iff $\delta_n=-1$. Therefore in the $B$-cycle the orbit is changed either zero or two times, which immediately implies the thesis.\\
3. Observe that $C^{-1}AC^{-1}=BA$. We have
$$\wh pC^n(C^{-1}AC^{-1})=\wh pC^{\iota(n-1)}V^{\frac{1-\delta_{n-1}}{2}}C^{-1}=\wh pC^{\iota(n-1)-\delta_{n-1}}V^{\frac{1-\delta_{n-1}}{2}}.$$
On the other hand
$$\wh pC^n(BA)=\wh pC^{\iota(n)+\delta_n}AV^{\frac{1-\delta_{n}}{2}}=\wh pC^{\iota(\iota(n)+\delta_n)}V^{\frac{1-\delta_{\iota(n)+\delta_n}}{2}}V^{\frac{1-\delta_{n}}{2}},$$
hence
$$C^{\iota(\iota(n)+\delta_n)}V^{\frac{1-\delta_{\iota(n)+\delta_n}}{2}}V^{\frac{1-\delta_{n}}{2}}
V^{\frac{1-\delta_{n-1}}{2}}C^{-(\iota(n-1)-\delta_{n-1})}\in\wh{\mc S}.$$
Since $\wh{\mc S}$ is an anisotropy subgroup,  the thesis follows.\\
4. Since $\{C^n, C^nV\}$ is a complete  system  of  distinct  right
coset  representatives $\wh{\mc S}$ in $\wh{\mc M}$, using the previous points and the Reidemeister-Schreier rewriting process, which is somewhat laborious, we  end the proof. We show two steps of the process, for example. First we show how the relation  $\Sigma_n\Sigma_{\iota(n)}=1$ arises from $A^2=1$:
$$\begin{array}{ll}
1&=C^nA^2C^{-n}=(C^nAV^{\frac{1-\delta_{n}}{2}}C^{-\iota(n)})C^{\iota(n)}AV^{\frac{1-\delta_{n}}{2}}C^{-n}\\&\\
 &= (C^nAV^{\frac{1-\delta_{n}}{2}}C^{-\iota(n)})\cdot(C^{\iota(n)}AV^{\frac{1-\delta_{\iota(n)}}{2}}C^{-n})\\&\\
 &=\Sigma_n\Sigma_{\iota(n)}.
 \end{array}$$
Then we show how $\Sigma_n\Sigma_{\iota(n)+\delta_n}\Sigma_{\iota(n-1)}=1$ arises from $B^3=1$:
$$\begin{array}{ll}
1&=C^nB^3C^{-n}=\Sigma_nC^{\iota(n)}V^{\frac{1-\delta_{\iota(n)}}{2}}CB^2C^{-n}=
\Sigma_nC^{\iota(n)+\delta_n}AV^{\frac{1-\delta_{\iota(n)}}{2}}CBC^{-n} \\&\\
 &=\Sigma_n\Sigma_{\iota(n)+\delta_n}C^{\iota(n-1)}C^{(\delta_n\cdot(\delta_{\iota(n)+\delta_n})-\delta_{n-1})}V^{\frac{1-\delta_{\iota(n)+\delta_n}}{2}}V^{\frac{1-\delta_{\iota(n)}}{2}}BC^{-n} \\&\\
 &= \Sigma_n\Sigma_{\iota(n)+\delta_n}C^{\iota(n-1)}AV^{\frac{1-\delta_{\iota(n)+\delta_n}}{2}}V^{\frac{1-\delta_{\iota(n)}}{2}}C^{-n+1}\\&\\
 &= \Sigma_n\Sigma_{\iota(n)+\delta_n}\Sigma_{\iota(n-1)}C^{n-1}
 V^{\frac{1-\delta_{\iota(n-1)}}{2}}V^{\frac{1-\delta_{\iota(n)+\delta_n}}{2}}
 V^{\frac{1-\delta_{\iota(n)}}{2}}C^{-n+1}\\&\\
 &=\Sigma_n\Sigma_{\iota(n)+\delta_n}\Sigma_{\iota(n-1)}.
 \end{array}$$
\end{proof}
\begin{remark}
In the next section we will show that in fact $\wh{\mc S}$ is a free product and that its independent  generators may be chosen from among ${\Sigma_n}'s$.
\end{remark}
\begin{remark}\label{examp}
We can easily extend the construction of \cite{BL2} to find maximal anisotropy groups that are not Neumann subgroups of $\wh{\mc M}$. Namely, instead of taking the crystallographic group $Q$ one may consider the full symmetry group of the regular tessellation of the Euclidean plane by hexagons. In that way we obtain the groups that contain the groups constructed in \cite{BL2} as normal subgroups of index 2.
\end{remark}
\section{Coset graphs and structure of Neumann subgroups of $\wh{\mc M}$}
The structure of Neumann subgroups of the modular group is completely described in \cite{BL1} or \cite{S}. It is shown there that every Neumann subgroup of $\mc M$ is necessarily a free product of $r_\infty$ infinite cyclic  groups, $r_2$ groups of order $2$ and $r_3$ groups of order $3$ subject to the conditions that  $r_2+r_3+r_\infty=\infty$ and if $r_\infty$ is finite then it is even. Moreover every scenario can be realized.
Now assume that our Neumann subgroup $\wh{\mc S}$ of $\wh{\mc M}$ is not contained in $\mc M$. Obviously $\mc S=\wh{\mc S}\cap \mc M$ has index two in $\wh{\mc S}$. Moreover, it turns out that $\mc S$ is a maximal nonparabolic and not Neumann subgroup but  the structure dramatically changes which is described in Theorem~\ref{NnN}.

Similarly like in \cite{BL1} and \cite{BL2} our main tool in this section is the Schreier coset graph. Let us recall the notion. Let $\mc G$ be a group and $L\subset \mc G$ be a set of its generators. Given a subgroup $\mc H<\mc G$ we define its \emph{coset graph} $\Gamma=\Gamma_{\mc G}(\mc H)=\Gamma(\mc H)$ putting $V(\Gamma)=\{\mc Hg:\;g\in\mc G\}$ and allow a pair   $(\mc Hg,\mc Hh)$ of vertices to be connected by the $l$-edge, if $\mc Hgl=\mc Hh$. In general, a graph defined in this way is a multigraph. However, if $l$ is an involution then we identify $l$-edge with its inverse and treat it as a undirected edge. We also allow loops in a coset graph corresponding to fix points of the action of generators.\\
In the previous section we described  Neumann subgroups of $\wh{\mc M}$ in terms of abstract structures called quadruples. In fact every quadruple determines a coset graph if  we consider $\Omega$ as a set of vertices  and then  the actions of $A$, $B$, $V$ on $\Omega$ give appropriate labeled edges in it. In general the opposite is not true. The same graph can admit distinct quadruples.\\
In fact every coset graph indicates a subgroup up to a conjugation. In order to express   this fact precisely we follow the idea of Brenner and Lyndon \cite{BL2} and introduce the notion of the abstract $(2,3,2)$-graph. A connected graph $\Gamma$ is a $(2,3,2)$\emph{-graph}  if its set of vertices is countable and its set of edges is divided into three disjoint sets: undirected $A$-edges, undirected $V$-edges and directed $B$-edges, subject to the following conditions:
\begin{enumerate}
\item at each vertex there is exactly one $A$-edge and one $V$-edge;
\item each vertex is a source of the precisely  one $B$-edge;
\item at each vertex , the $B$-edge is either a loop, or is one in a cycle of three $B$-edges;
\item at each vertex, every  AVAV-path and every   ABVABV-path  is a cycle.
\end{enumerate}
Observe that there is the natural action of $PGL(2,Z)$ on a $(2,3,2)$-graph given by the actions $A$, $B$ and $V$.

The proof of the following is analogous to the proof of  Theorem~\ref{Nsym} and we omit it.
\begin{proposition} There is a bijective correspondence  between the isomorphism
classes of $(2,3,2)$-graphs and the conjugacy classes of subgroups of $\wh{\mc M}$.
\end{proposition}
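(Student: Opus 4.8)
The plan is to mimic the proof of Theorem~\ref{Nsym}, realizing the correspondence through the classical dictionary between transitive actions and subgroups up to conjugacy, specialized to the Schreier coset graph. Concretely, I would construct two mutually inverse assignments: one sending a subgroup $\mc H<\wh{\mc M}$ to its coset graph $\Gamma_{\wh{\mc M}}(\mc H)$, and one sending a $(2,3,2)$-graph $\Gamma$ to the stabilizer of a chosen vertex, and then verify that both respect the equivalence relations on each side, namely graph isomorphism versus conjugacy.

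\emph{Forward map.} Given $\mc H$, form $\Gamma_{\wh{\mc M}}(\mc H)$ using the generating set $\langle\omega,\omega\tau,\nu\rangle$ underlying \eqref{quadruple}, so that the $A$-, $B$- and $V$-edges are right translations by $\omega$, $\omega\tau$ and $\nu$. I would check the four axioms one relation at a time: $A^2=V^2=1$ give axiom (1), with a coset fixed by $\omega$ or $\nu$ producing a loop; $B^3=1$ gives axioms (2)--(3), since $B$ is a permutation whose cube is the identity; and the two mixed relators $(AV)^2=(ABV)^2=1$ of \eqref{presemg} give exactly axiom (4). Connectedness is immediate because $\omega,\omega\tau,\nu$ generate $\wh{\mc M}$, so the vertex set $\{\mc Hg\}$ is a single orbit. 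Hence $\Gamma_{\wh{\mc M}}(\mc H)$ is a $(2,3,2)$-graph.

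\emph{Backward map.} Given a $(2,3,2)$-graph $\Gamma$, its edges define permutations $A,B,V$ of $V(\Gamma)$, and axioms (1)--(4) say precisely that these satisfy $A^2=B^3=V^2=(AV)^2=(ABV)^2=1$, which are the images under $\Phi$ of the defining relators of \eqref{presemg}. By the universal property of that presentation (von Dyck) there is then a unique action $\Phi\colon\wh{\mc M}\to\mbox{Sym}(V(\Gamma))$ with $\omega\Phi=A$, $(\omega\tau)\Phi=B$, $\nu\Phi=V$, and connectedness of $\Gamma$ makes it transitive. Fixing $p_0\in V(\Gamma)$ and setting $\mc H=\{g\in\wh{\mc M}:\;p_0(g\Phi)=p_0\}$, the orbit map $\mc Hg\mapsto p_0(g\Phi)$ is a bijection $V(\Gamma_{\wh{\mc M}}(\mc H))\to V(\Gamma)$ intertwining the three edge-actions, i.e. an isomorphism of $(2,3,2)$-graphs. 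Starting instead from $\mc H$, the stabilizer of the base coset $\mc H=\mc H\cdot 1$ in its own coset graph is again $\mc H$, so the two assignments are mutually inverse.

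\emph{Descent to classes and the main obstacle.} Finally I would check that both assignments respect the two equivalences. Replacing $p_0$ by $p_1=p_0(x\Phi)$ replaces the stabilizer by the conjugate $x^{-1}\mc Hx$, and any graph isomorphism intertwines the edge-actions, hence carries one vertex stabilizer to a conjugate; so isomorphic graphs produce a single conjugacy class. In the other direction the explicit map $\mc Hg\mapsto(x\mc Hx^{-1})(xg)$ is an isomorphism $\Gamma_{\wh{\mc M}}(\mc H)\to\Gamma_{\wh{\mc M}}(x\mc Hx^{-1})$, so conjugate subgroups have isomorphic coset graphs. The delicate point, and essentially the only nonformal step, is verifying that graph axiom (4) is genuinely \emph{equivalent} to the relators $(AV)^2=1$ and $(ABV)^2=1$, not merely implied by them: one must read ``every $AVAV$-path and every $ABVABV$-path is a cycle'' as the vanishing of the corresponding words at each vertex, keeping careful track of the reversed-product convention $GH=H\circ G$ and of the loop/fixed-point conventions, so that the four axioms capture all five defining relators and nothing weaker. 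Granting this, von Dyck's theorem applies and the correspondence is a genuine bijection.
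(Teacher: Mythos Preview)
Your proposal is correct and follows precisely the approach the paper itself indicates: the paper omits the proof entirely, stating only that it ``is analogous to the proof of Theorem~\ref{Nsym},'' and your argument is exactly that analogy carried out in detail---the standard dictionary between transitive $\wh{\mc M}$-actions (equivalently, connected Schreier graphs encoding the defining relators of~\eqref{presemg}) and conjugacy classes of subgroups via vertex stabilizers. There is nothing to add; your verification that axioms (1)--(4) match the five relators of~\eqref{presemg} is the only substantive bookkeeping, and you have identified it correctly.
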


For our purpose it is convenient to consider the coset graph of a given subgroup of $\wh{\mc M}$  without $V$-edges.   After \cite{BL2} we  call such  graphs $(2,3)$\emph{-graphs}. From  \cite{BL2} it follows that this graph should be a coset graph of some subgroup of  $\mc M$. We will see soon that this subgroup is equal to $\wh{\mc S}\cap\mc M$. For a subgroup $\wh{\mc S}$ of $\wh{\mc M}$ such a graph will be denoted by $\Gamma(\wh{\mc S})$.\\
If the actions of $A$ and $B$ have no fixed points then $\Gamma(\wh{\mc S})$ is a cubic graph. If $v=vA$, then we emphasize   that still there  is a precisely one $A$-edge at $v$.  If $v=vB$ than $v$ is a vertex of degree 1 and in such a case $\Gamma(\wh{\mc S})$ is cuboidal.

We have the following analogue of Corollary~1.7 from \cite{BL2}.
\begin{theorem}\label{Ncg}
A subgroup $\wh{\mc S}$ of $\wh{\mc M}$ is a Neumann subgroup  if and only if $\Gamma(\wh{\mc S})$ contains precisely two $C$-orbit which both are infinite and exchanged by the action of $V$.
Moreover a Neumann subgroup $\wh{\mc S}$ is not entirely contained in $\mc M$ if and only if $\Gamma(\wh{\mc S})$ is connected.
\end{theorem}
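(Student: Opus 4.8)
The plan is to derive both halves from Theorem~\ref{Nsym}, reading the $(2,3)$-graph $\Gamma(\wh{\mc S})$ as the combinatorial shadow of a quadruple whose connectivity is controlled by $\langle A,B\rangle=\Phi(\mc M)$. I identify the vertex set of $\Gamma(\wh{\mc S})$ with $\Omega=\wh{\mc S}\backslash\wh{\mc M}$, on which $A=\omega\Phi$, $B=(\omega\tau)\Phi=\varphi\Phi$ and $V=\nu\Phi$ act by right multiplication, and a \emph{$C$-orbit} means an orbit of $C=AB=\tau\Phi$. Note first that the relation $(ABV)^2=1$ gives $VCV=C^{-1}$, so $V$ permutes the set of $C$-orbits; with exactly two orbits it either fixes both or interchanges them, which is why the dichotomy appearing in the statement is the natural one.

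For the forward implication of the first equivalence, let $\wh{\mc S}$ be Neumann. The Neumann property exhibits the cosets as $\tau^n$ and $\tau^n\nu$, so $\Omega=\Omega_0\sqcup\Omega_1$ with $\Omega_0=\{\wh{\mc S}\tau^n\}$ and $\Omega_1=\{\wh{\mc S}\tau^n\nu\}$, and the formulas from the proof of Theorem~\ref{Nsym}, namely $(\wh{\mc S}\tau^n\nu^j)C=\wh{\mc S}\tau^{n+(-1)^j}\nu^j$ and $(\wh{\mc S}\tau^n\nu^j)V=\wh{\mc S}\tau^n\nu^{j+1}$, show that $C$ is transitive on each $\Omega_j$ and that $V$ interchanges $\Omega_0$ and $\Omega_1$; hence there are exactly two $C$-orbits, both infinite, swapped by $V$. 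For the converse, suppose $\Gamma(\wh{\mc S})$ has precisely two $C$-orbits $\Omega_0,\Omega_1$, both infinite and interchanged by $V$. Then $(\Omega,A,B,V)$ with this splitting is a quadruple: the relations $A^2=B^3=V^2=(AV)^2=(ABV)^2=1$ hold automatically, being $\Phi$-images of relations in $\wh{\mc M}$, and $\Omega_0V=\Omega_1$ is the hypothesis. It is semi-transitive because each $\Omega_j$ is a single $C$-orbit, so by Theorem~\ref{Nsym} the subgroup it encodes, which is $\wh{\mc S}$ up to conjugacy, is Neumann; since being a complement of an isotropy subgroup is conjugation-invariant (Corollary~\ref{np}), $\wh{\mc S}$ itself is Neumann.

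For the \emph{moreover} assertion I use that $\Gamma(\wh{\mc S})$ carries only $A$- and $B$-edges, so its connected components are exactly the orbits of $\langle A,B\rangle=\Phi(\langle\omega,\varphi\rangle)=\Phi(\mc M)$ on $\Omega=\wh{\mc S}\backslash\wh{\mc M}$. Thus $\Gamma(\wh{\mc S})$ is connected iff $\mc M$ acts transitively on the cosets, that is iff $\wh{\mc S}\mc M=\wh{\mc M}$. Because $\mc M$ is normal of index $2$ in $\wh{\mc M}$, the subgroup $\wh{\mc S}\mc M/\mc M$ of $\wh{\mc M}/\mc M\cong Z/2$ is the whole group precisely when $\wh{\mc S}$ has an element outside $\mc M$; equivalently $\wh{\mc S}\mc M=\wh{\mc M}$ iff $\wh{\mc S}\not\subseteq\mc M$, whereas $\wh{\mc S}\subseteq\mc M$ forces exactly the two components carried by $\wh{\mc S}$ and $\wh{\mc S}\nu$. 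This yields the stated equivalence.

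The relation-checking and the coset bookkeeping are routine. The step that needs the most care is the converse of the first equivalence: one must confirm that the purely graph-theoretic hypothesis supplies \emph{all} of the quadruple data—in particular that $V$ genuinely swaps the two $C$-orbits (guaranteed by the hypothesis and consistent with $VCV=C^{-1}$) rather than stabilizing one of them, and that invoking Theorem~\ref{Nsym} returns the conjugacy class actually encoded by $\Gamma(\wh{\mc S})$ rather than some unrelated Neumann subgroup. Reconciling the ``up to conjugacy'' in Theorem~\ref{Nsym} with the fixed graph $\Gamma(\wh{\mc S})$, through the correspondence between $(2,3,2)$-graphs and conjugacy classes of subgroups, is where I would be most attentive.
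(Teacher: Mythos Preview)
Your proof is correct. For the first equivalence it is essentially the paper's argument, though the paper handles the converse more directly: once there are exactly two infinite $C$-orbits exchanged by $V$, the orbit through the base coset is $\{\wh{\mc S}\tau^n:n\in Z\}$ and its $V$-image is $\{\wh{\mc S}\tau^n\nu:n\in Z\}$, and the pairwise distinctness of these cosets \emph{is} the Neumann condition. Your detour through Theorem~\ref{Nsym} works, but the conjugacy worry is unnecessary---taking the stabilized element $\wh p$ to be the identity coset $\wh{\mc S}$ returns $\wh{\mc S}$ on the nose. For the \emph{moreover} clause you take a genuinely different and cleaner route. The paper argues by cases: when $\wh{\mc S}\subset\mc M$ it exhibits the two components explicitly, and when $\wh{\mc S}\not\subset\mc M$ it invokes the data $\delta_n$ of Proposition~\ref{prez} to find an $n$ with $\delta_n=-1$ and thereby an $A$-edge bridging the two $C$-orbits. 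Your observation that the connected components of $\Gamma(\wh{\mc S})$ are exactly the $\Phi(\mc M)$-orbits, so that connectedness is equivalent to $\wh{\mc S}\,\mc M=\wh{\mc M}$, reduces the whole question to an index-$2$ triviality and bypasses the involution machinery entirely. The paper's argument is more constructive (it locates the bridging edge), while yours is shorter, more conceptual, and applies verbatim to any subgroup of $\wh{\mc M}$, Neumann or not.
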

\begin{proof}
 If $\wh{\mc S}$  is a Neumann subgroup then its coset graph, by definition, contains two infinite $C$-orbits which are exchanged by the action of $V$. \\
 Assume now that $\Gamma(\wh{\mc S})$ has two infinite $C$-orbits exchanged by the action of $V$ , then one of them is $\{\wh{\mc S}C^n:\;\in Z\}$ since there is no finite $C$-orbits. If the orbits     $\{\wh{\mc S}C^n:\;\in Z\}$ and $\{\wh{\mc S}VC^n:\;\in Z\}$ were  the same then it would be preserved by the action of $V$, contrary to the assumption. Therefore $\wh{\mc S}$ is a Neumann subgroup.

If $\wh{\mc S}<\mc M$ is a Neumann subgroup then we can consider two coset graphs of $\wh{\mc S}$, one with respect to $\mc M$, denoted by $\Gamma_{\mc M}(\wh{\mc S})$, and the other one with respect to $\wh{\mc M}$. Considering the second one we have add to $V(\Gamma_{\mc M}(\wh{\mc S}))$ the cosets arising by right action by $V$ to get $\Omega=V(\Gamma(\wh{\mc S}))$. Obviously $V(\Gamma_{\mc M}(\wh{\mc S}))$ and the added set of vertices are two disjoint (recall that we have abandoned $V$-edges) connected components of $\Gamma(\wh{\mc S})$.

Let  $\wh{\mc S}$ be a Neumann subgroup not contained in $\mc M$. Then, by definition, $\Omega$ consists of two $C$-orbits, namely $\Omega_j=\{pC^nV^j:\;n\in Z\}$, $j=0,1$, which form two sets of connected components. By Remark~\ref{inwolucja} there is $n\in Z$ such that $\delta_n=-1$. Therefore we have
$$pC^nA=p\Sigma_nC^{\iota(n)}V=pC^{\iota(n)}V.$$
It follows that the two $C$-orbits are connected by the $A$-edge, hence $\Gamma(\wh{\mc S})$ is connected.
\end{proof}
\begin{example}
We cannot abandone the assumption about the action of $V$ in the above theorem since there are subgroups of the extended modular group with the coset graph consisting of two infinite $C$-orbits being  not Neumann subgroups. Such a subgroup has to be necessarily isotropic. The one with the simplest coset graph (depicted in the Fig.~1) is the free product of the groups of order two with the set of independent generators equal to
$$\{V,\, C^{3k}AC^{-3k},\,C^{3k+2}BAB^{-1}C^{-3k-2}:\;k\in Z\}.$$
The two orbits are equal to $\{\wh{\mc S}C^n:\;n\in Z\}$ and    $\{\wh{\mc S}B^{-1}C^n:\;n\in Z\}$.
\end{example}
 \begin{figure}[ht]{\footnotesize \textbf{ Fig. 1}\hspace{2mm} The coset graph of the non-Neumann subgroup of $\wh{\mc S}$ consisting of two $C$-orbits (in fact a pair of quasi-Eulerian paths). The coset $\wh{\mc S}$ is situated at the enlarged vertex; $A$-edges are depicted as bold lines; directed $B$-edges  as thin lines and $V$-edges as dashed lines.  }
 \centering
     \includegraphics[width=.9\textwidth]{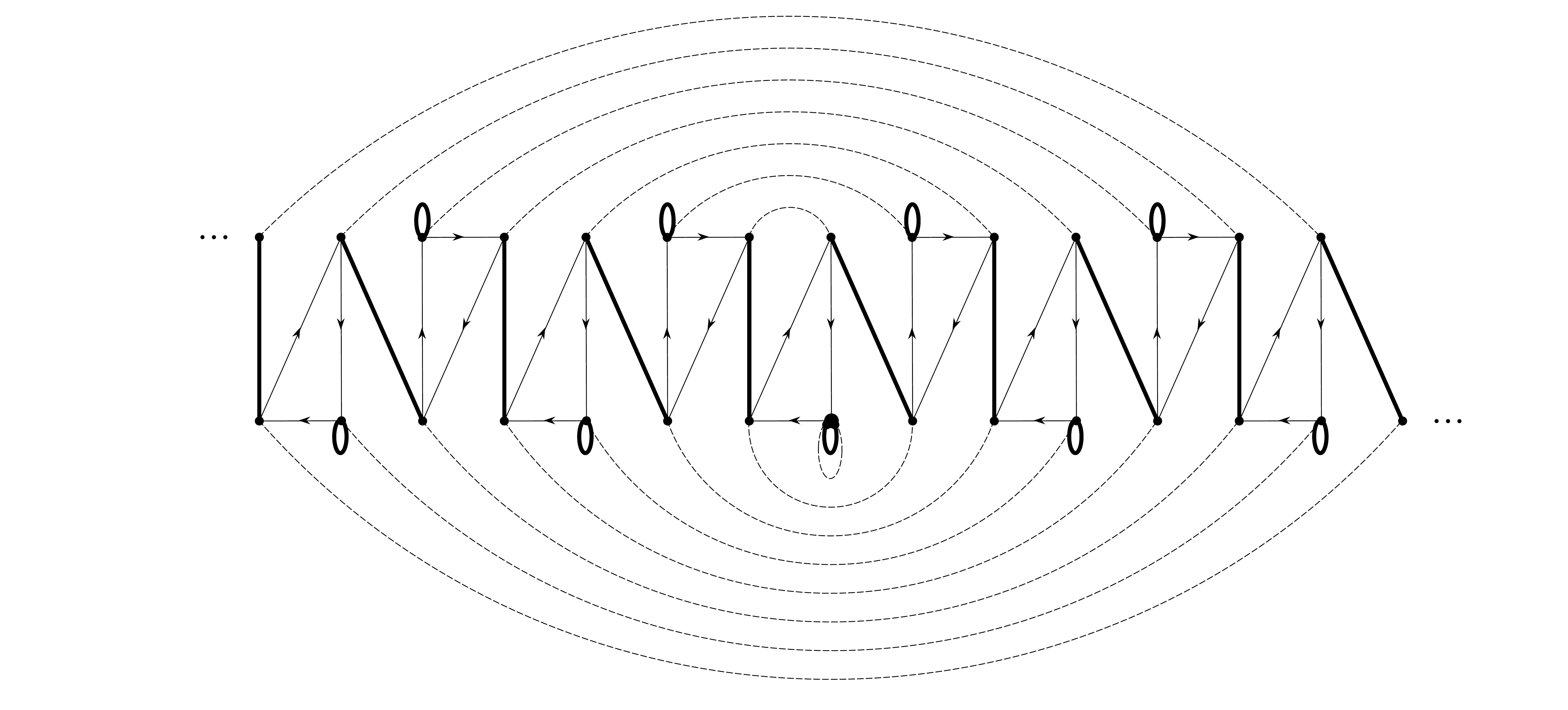}
\end{figure}

\begin{remark}\label{N12o}
 If $\wh{\mc S}<\mc M$ is a Neumann subgroup of $\wh{\mc M}$ then two disjoint connected components of $\Gamma(\wh{\mc S})$ are isomorphic in the following sense. The  action of $AV$ on $\Omega_0$ is a bijections which carries this set onto $\Omega_1$, $A$-edges  in $\Omega_0$ onto $A$-edges in $\Omega_1$ and $B$-edges in $\Omega_0$ onto $B^{-1}$-edges in $\Omega_1$.\\
 When we consider the coset graph of $\wh{\mc S}$ in $\mc M$ we have to identify those two orbits. The fact that  a subgroup of $\mc M$ is a Neumann one iff its coset graph in $\mc M$ consists of precisely one $C$-orbit is proved in \cite{BL1}.
 \end{remark}

 Given a subgroup $\wh{\mc S}$ of $\wh{\mc M}$ we consider a subgroup
$$\mc S=\wh{\mc S}\cap\mc  M$$
of $\mc M$.
From now on we always assume that $\wh{\mc S}$ is not contained in $\mc M$.
\begin{remark}
We do not know yet whether a subgroup of $\mc M$ can be an intersection of two distinct Neumann subgroups of $\wh{\mc M}$. However, we are able to show, using the methods similar to those used in the proof of Theorem~\ref{maxnonpar}, that if it is a case then there is a positive integer  $k$ such that $(AB)^k(BA)^{-k}\in \mc S$. Moreover, it can be showed that if $k\in\{1,2,3,4,5\}$ then $C^{mk}\in \mc S$ for some positive integer $m$, which contradicts definition of $\mc S$. Also we are able to show that $k$ has to be even. Finally, we know that it is imposible if there is finitely many $\Sigma_n$'s of negative determinant for both $\wh{\mc S}$'s.
\end{remark}
Properly understood coset graphs of  $\wh{\mc S}$ and $\mc S$ are isomorphic.
\begin{lemma}\label{isograph}
The cosets graphs $\Gamma(\wh{\mc S})$  and $\Gamma_{\mc M}(\mc S)$ of  $\mc S$ in $ \mc M$ are isomorphic.
\end{lemma}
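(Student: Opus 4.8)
The plan is to write down the obvious candidate isomorphism on vertex sets and then verify that it transports $A$- and $B$-edges correctly. The vertices of $\Gamma_{\mc M}(\mc S)$ are the right cosets $\mc S g$ with $g\in\mc M$, while the vertices of $\Gamma(\wh{\mc S})$ form the set $\Omega$ of right cosets $\wh{\mc S}h$ with $h\in\wh{\mc M}$ (recall that in passing to the $(2,3)$-graph we have merely discarded the $V$-edges, not the vertices, so $\Omega=V(\Gamma(\wh{\mc S}))$). I would define
$$\Psi:\ \mc S g\longmapsto \wh{\mc S}g,\qquad g\in\mc M,$$
and claim that $\Psi$ is a well-defined bijection from the vertex set of $\Gamma_{\mc M}(\mc S)$ onto $\Omega$.

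First I would establish that $\Psi$ is a well-defined bijection, which rests on two group-theoretic facts. Well-definedness and injectivity both follow from $\wh{\mc S}\cap\mc M=\mc S$: for $g,g'\in\mc M$, the equality $\wh{\mc S}g=\wh{\mc S}g'$ means $g'g^{-1}\in\wh{\mc S}$, and since automatically $g'g^{-1}\in\mc M$, this is equivalent to $g'g^{-1}\in\wh{\mc S}\cap\mc M=\mc S$, i.e.\ to $\mc S g=\mc S g'$. For surjectivity I would use that $\wh{\mc S}\mc M=\wh{\mc M}$: every $h\in\wh{\mc M}$ factors as $h=sm$ with $s\in\wh{\mc S}$ and $m\in\mc M$, whence $\wh{\mc S}h=\wh{\mc S}m=\Psi(\mc S m)$.

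Once the bijection is in place, edge preservation is the routine part. In both graphs the $A$-edges are induced by right multiplication by $\omega$ and the $B$-edges by right multiplication by $\omega\tau$ (so that $C=AB$ is right multiplication by $\tau$), and crucially $\omega,\omega\tau\in\mc M$. Hence for $g\in\mc M$ the neighbours $g\omega$ and $g\,\omega\tau$ again lie in $\mc M$, and $\Psi$ sends the $A$-edge $\{\mc Sg,\mc Sg\omega\}$ to $\{\wh{\mc S}g,\wh{\mc S}g\omega\}$ and the directed $B$-edge $(\mc Sg,\mc Sg\,\omega\tau)$ to $(\wh{\mc S}g,\wh{\mc S}g\,\omega\tau)$, which are an $A$-edge and a $B$-edge of $\Gamma(\wh{\mc S})$ of the same type and direction; loops (fixed points of $A$ or $B$) are carried to loops automatically. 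Applying the same argument to $\Psi^{-1}$ shows that no edges are created or destroyed, so $\Psi$ is a graph isomorphism.

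The only point needing genuine justification — the main obstacle, such as it is — is the surjectivity input $\wh{\mc S}\mc M=\wh{\mc M}$, and it is exactly here that the standing hypothesis $\wh{\mc S}\not\subseteq\mc M$ enters. Since $\mc M\trianglelefteq\wh{\mc M}$ has index $2$, the set $\wh{\mc S}\mc M$ is a subgroup containing $\mc M$; as $\wh{\mc S}\not\subseteq\mc M$ it strictly contains $\mc M$ and therefore equals $\wh{\mc M}$. (The same computation gives $[\wh{\mc S}:\mc S]=[\wh{\mc S}\mc M:\mc M]=2$, consistent with the index-two remark preceding the lemma.) Everything else is a direct coset manipulation, so I expect the actual write-up to be short.
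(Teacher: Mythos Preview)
Your proposal is correct and is essentially the same argument as the paper's: both use the map $\mc S g\leftrightarrow \wh{\mc S}g$ for $g\in\mc M$ (the paper writes it in the inverse direction). Your write-up is simply more explicit, spelling out the bijectivity via $\wh{\mc S}\cap\mc M=\mc S$ and $\wh{\mc S}\mc M=\wh{\mc M}$ and the edge preservation, all of which the paper leaves to the reader after stating the map.
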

\begin{proof}
Given a coset $\wh{\mc S}D$, it is a vertex of   $\Gamma(\wh{\mc S})$, we assume that $D\in\mc M$. Then the isomorphism between
 $\Gamma(\wh{\mc S})$ and $\Gamma_{\mc M}(\mc S)$ is given by
 $$\wh{\mc S}D\longmapsto \mc SD.$$
\end{proof}
 The following proposition yields the basic description of the structure of Neumann subgroups of the extended modular group.
\begin{proposition}\label{2}
 If $\wh{\mc S}$ is a Neumann subgroup of $\wh{\mc M}$    then
 \begin{enumerate}
 \item  $\mc S$ is a normal subgroup in $\wh{\mc S}$ of index two;
 \item  $\wh{\mc S}$ is never a semi-direct product of $\mc S$ and a subgroup isomorphic to $\mc C_2$;
 \item  $\wh{\mc S}$ is an infinite free product of groups isomorphic to $\mc C_2$ or $\mc C_3$ or the infinite cyclic group.
 \end{enumerate}
 \end{proposition}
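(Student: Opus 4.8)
The plan is to derive all three parts from two ingredients: the determinant homomorphism $\det\colon\wh{\mc M}\to\mc C_2$ with kernel $\mc M$, and the fact that a Neumann subgroup is a maximal anisotropy subgroup (Proposition~\ref{Nmnp}), so that $\wh{\mc S}$ meets no conjugate of $\wh{\mc T}$, equivalently no stabilizer $\wh{\mc M}_{(p)}$ of a cusp $p\in Q^*$, nontrivially.

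For part (1) I would restrict $\det$ to $\wh{\mc S}$: its kernel is $\wh{\mc S}\cap\mc M=\mc S$, which is therefore normal in $\wh{\mc S}$, and since $\wh{\mc S}\not\subseteq\mc M$ the restriction is onto $\mc C_2$, giving index two. For part (2), an internal semidirect decomposition of $\wh{\mc S}$ with a complement isomorphic to $\mc C_2$ would produce an involution $g\in\wh{\mc S}$ with $\det g=-1$; so it suffices to show that no such involution lies in an anisotropy subgroup. The key step is that every determinant $-1$ involution of $\wh{\mc M}$ is isotropic: a short matrix computation forces trace $0$, and then the two boundary fixed points are the roots of $cx^2-2ax-b=0$ (or $\infty$ when $c=0$), whose discriminant $4(a^2+bc)=4$ is a square, hence both are rational. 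Thus $g$ fixes a cusp and lies in some $\wh{\mc M}_{(p)}$, so it cannot belong to $\wh{\mc S}$, and no splitting exists.

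Part (3) is where the work lies, and I would use Bass--Serre theory. As the $(2,3,\infty)$ triangle (Coxeter) group, $\wh{\mc M}$ is the amalgam $\langle\omega,\nu\rangle\ast_{\langle\omega\nu\rangle}\langle\varphi,\omega\nu\rangle\cong(\mc C_2\times\mc C_2)\ast_{\mc C_2}S_3$; since the $S_3$ factor meets $\mc M$ in $\mc C_3$, which has no involution, the amalgamated $\mc C_2=\langle\omega\nu\rangle$ is generated by a reflection (a determinant $-1$ involution). Let $T$ be the associated tree, on which $\wh{\mc S}$ acts without inversion (the two vertex types are never interchanged). Every edge stabilizer is a conjugate of $\langle\omega\nu\rangle$, hence generated by a reflection, and by part (2) the anisotropy group $\wh{\mc S}$ contains no such element; therefore all edge stabilizers of the $\wh{\mc S}$-action are trivial. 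By the structure theorem for groups acting on trees with trivial edge groups, $\wh{\mc S}=\bigl(\ast_v G_v\bigr)\ast F$, the free product of the vertex stabilizers $G_v$ over a transversal of vertex orbits with a free group $F$ of rank equal to the first Betti number of $\ol T=\wh{\mc S}\backslash T$. Now each $G_v$ is $\wh{\mc S}\cap g(\mc C_2\times\mc C_2)g^{-1}$ or $\wh{\mc S}\cap gS_3g^{-1}$; discarding the reflections that $\wh{\mc S}$ must avoid, the only possibilities are $1$, the elliptic $\langle g\omega g^{-1}\rangle\cong\mc C_2$, and $\langle g\varphi g^{-1}\rangle\cong\mc C_3$. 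Writing $F$ as a free product of infinite cyclic groups yields the stated decomposition into copies of $\mc C_2$, $\mc C_3$ and $Z$.

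The remaining point, which I expect to be the hardest, is that this free product is \emph{infinite}. The edge set of the connected graph $\ol T$ is $\wh{\mc S}\backslash\wh{\mc M}/\langle\omega\nu\rangle$, and since $[\wh{\mc M}:\wh{\mc S}]=\infty$ (the cosets are indexed by the infinite group $\wh{\mc T}$) while $|\langle\omega\nu\rangle|=2$, it is infinite. In $\ol T$ an $S_3$-vertex has degree $3$ or $1$ according as its group is trivial or $\mc C_3$, and a $(\mc C_2\times\mc C_2)$-vertex has degree $2$ or $1$; counting edge-incidences from each side gives $|E|=3V_3^0+V_3^1=2V_2^0+V_2^1$ (superscripts recording whether the vertex group is trivial or not), so $\sum_v|G_v|^{-1}=\frac12|E|+\frac13|E|=\frac56|E|$ and the graph-of-groups Euler characteristic equals $\frac56|E|-|E|=-\frac16|E|$. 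As $|E|=\infty$ this is $-\infty$, which is incompatible with a free product of finitely many copies of $\mc C_2$, $\mc C_3$ and $Z$, since such a product has finite Euler characteristic. Making this bound rigorous in the infinite-index regime, for example by exhausting $\ol T$ by finite connected subgraphs, is the technical heart of the argument; everything preceding it is formal.
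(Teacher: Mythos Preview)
Your arguments for (1), (2), and the free-product structure in (3) are correct and match the paper's approach. For (2) you compute boundary fixed points directly, whereas the paper observes that every determinant $-1$ involution of $\wh{\mc M}$ is conjugate to $\nu$ or to $\tau\nu$, both visibly in $\wh{\mc T}$; either route works. For (3) your Bass--Serre argument on the amalgam $D_2\star_{\mc C_2}D_3$ is precisely the modern form of the ``generalized Kurosh theorem'' the paper cites, and your identification of the possible vertex groups as $1$, $\mc C_2$, $\mc C_3$ is exactly the content of the paper's one-line proof.

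The only substantive difference concerns the word \emph{infinite}. The paper's proof of this proposition does not actually establish infinitude: it cites Kurosh (which gives only the free-product form, not the cardinality of the index set), and as the Remark immediately following makes explicit, the equality $\wh r_2+\wh r_3+\wh r_\infty=\infty$ is proved only later in Theorem~\ref{NnN} via the coset-graph machinery. Your Euler-characteristic heuristic is a natural attempt, and you are right to flag it as the hard step. The concrete obstruction is that the identity you use implicitly assumes $\ol T$ is finite: nothing in your setup rules out an infinite $\ol T$ with finite first Betti number and only finitely many nontrivial vertex groups (a finite core with infinite trivalent trees attached, all vertex groups trivial). In that scenario $|E|=\infty$ while $\wh{\mc S}$ is still a finite free product, and the formal computation $-|E|/6$ is simply undefined rather than equal to $\chi(\wh{\mc S})$. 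Exhausting by finite subgraphs does not repair this directly, since boundary vertices have the wrong degree for your edge count. One clean fix is to observe that if $\wh{\mc S}$ were finitely generated it would act cocompactly on its minimal invariant subtree $T_0\subset T$, and then to argue $T_0=T$; the paper instead reads infinitude off the coset graph in Theorem~\ref{NnN}.
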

\begin{proof}
1. Obviously $\mc S\lhd\wh{\mc S}$ and $\wh{\mc S}/\mc S\simeq\mc  C_2$.\\
2. If $\wh{\mc S}$ was  a semi-direct product of $\mc S$ and a subgroup isomorphic to $\mc C_2$ then $\wh{\mc S}$ would contain an involution of negative determinant which is conjugate to either $CV$ or $V$, hence is a member of an isotropy subgroup of $\wh{\mc M}$, a contradiction, by Corollary~\ref{np}. \\
3. It is known that $\wh{\mc M} =D_2\star_{\mathbb Z_2}D_3$ is a free product of $D_2=C_2\times C_2$ and $D_3=C_3\rtimes C_2$ amalgamated over the common subgroup generated by the involution of negative determinant. Since $\wh{\mc S}$ does not contain such an involution,    the thesis follows from the generalization of the Kurosh theorem (see i.e. \cite{KS}).
 \end{proof}
\begin{remark}
As a byproduct of the proof of Theorem~\ref{NnN}, using technics involving coset graphs, we will obtain an independent on the generalized Kurosh theorem proof of 3 of the above proposition.
\end{remark}

In \cite{BL2} there is constructed  a wide class of maximal nonparabolic subgroups  that are not Neumann subgroups of $\mc M$. These groups have the cosets graphs consisting of even or infinite number of (infinite) $C$-orbits. The following, one of the main result of the present paper, shows that Neumann subgroups of $\wh{\mc M}$ lead to completely disjoint class of such subgroups. Moreover in the last section we show that each possible structure in $\mc M$  as a free product is realized by the continuum  of non-conjugate   groups.
\begin{theorem}\label{maxnonpar}
If $\wh{\mc S}$ is a Neumann subgroup of $\wh{\mc M}$ then $\mc S$ is a maximal nonparabolic subgroup  that is not a Neumann subgroup of $\mc M$.
\end{theorem}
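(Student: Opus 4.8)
The plan is to split the statement into its three assertions: that $\mc S$ is nonparabolic, that it is not a Neumann subgroup of $\mc M$, and that it is maximal among nonparabolic subgroups. The first two are quick. For nonparabolicity, note that by Proposition~\ref{Nmnp} the group $\wh{\mc S}$ is a maximal anisotropy subgroup, hence meets no isotropy subgroup nontrivially; since every parabolic element of $\mc M$ lies in a maximal parabolic, hence isotropy, subgroup and $\mc S\subseteq\wh{\mc S}$, the group $\mc S$ contains no nontrivial parabolic. For the non-Neumann assertion, Lemma~\ref{isograph} gives $\Gamma_{\mc M}(\mc S)\cong\Gamma(\wh{\mc S})$, and Theorem~\ref{Ncg} together with $\wh{\mc S}\not\subseteq\mc M$ says the latter is connected with exactly two infinite $C$-orbits; since, by \cite{BL1} (see Remark~\ref{N12o}), a subgroup of $\mc M$ is Neumann exactly when its coset graph in $\mc M$ has a single $C$-orbit, $\mc S$ cannot be Neumann.

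For maximality I would first record the dictionary between the two languages: a subgroup of $\mc M$ is nonparabolic if and only if it is an anisotropy subgroup of $\wh{\mc M}$ lying in $\mc M$ (in $\mc M$ the isotropy elements are precisely the parabolic ones), and this holds if and only if its coset graph in $\mc M$ has no finite $C$-orbit. Now suppose $\mc S\subsetneq G\le\mc M$ with $G$ nonparabolic; it suffices to produce a parabolic element in $G$. The coset graph $\Gamma_{\mc M}(G)$ is a rooted, $C$-equivariant quotient of $\Gamma_{\mc M}(\mc S)\cong\Gamma(\wh{\mc S})$, whose two $C$-orbits are each a copy of $\mathbb Z$. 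A nontrivial identification inside a single orbit immediately produces a finite $C$-orbit, i.e. a parabolic; so, if $G$ is to stay nonparabolic, the quotient must be injective on each orbit and can therefore only glue the two orbits together by a global offset, collapsing them to one infinite orbit. Hence $G$ would be a Neumann subgroup of $\mc M$ with $[G:\mc S]=2$; and since $G\subseteq\mc M$ one checks $G\wh{\mc T}=\wh{\mc M}$ and $G\cap\wh{\mc T}=1$, so $G$ would in fact be a Neumann subgroup of $\wh{\mc M}$ contained in $\mc M$, with $\mc S=\wh{\mc S}\cap G$.

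The heart of the proof — and the step I expect to be the main obstacle — is to rule out this merging. I would work at the level of the data $(\iota,\delta)$ attached to $\wh{\mc S}$ in Proposition~\ref{prez}. Writing the gluing as the identification $pVC^m\equiv pC^{m+c}$ and imposing compatibility with the commuting actions of $A$ and $V$ and with $VCV=C^{-1}$, one finds that such an offset $c$ can exist only if $\delta_k=\delta_{c-k}$ and $\iota(k)+\iota(c-k)=c$ for all $k$, i.e. only if $(\iota,\delta)$ is symmetric about $c/2$. The plan is then to feed this reflection symmetry into the structural identity $C^{-1}AC^{-1}=BA$ and the relation~(\ref{eqiota}), and to follow a closed path built from $C$-edges together with one of the \emph{inter-orbit} $A$-edges — which exist precisely because $\Gamma(\wh{\mc S})$ is connected, equivalently because some $\delta_{n_0}=-1$ (Theorem~\ref{Ncg}) — so as to read off a nontrivial relation $C^{mk}\in G$ with $k\neq0$, that is, a parabolic $\tau^{mk}$. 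This contradicts the nonparabolicity of $G$, and the contradiction is what yields maximality.

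The delicate point is exactly this last manoeuvre: the abstract $(2,3)$-graph fold is by itself consistent with the relations (one checks that reflection symmetry of $(\iota,\delta)$ is compatible with~(\ref{eqiota})), so the contradiction cannot come from the $(2,3)$-structure alone but must genuinely use the negative-determinant generators $\Sigma_n$ with $\delta_n=-1$, which are present only because $\wh{\mc S}\not\subseteq\mc M$. Equivalently, one shows that $\mc S$ cannot be written as $\wh{\mc S}\cap G$ with $G$ a Neumann subgroup of $\wh{\mc M}$ contained in $\mc M$; the same bookkeeping with the two complement decompositions $\wh{\mc M}=\wh{\mc S}\,\wh{\mc T}=G\,\wh{\mc T}$ underlies the related assertions in the remark preceding Lemma~\ref{isograph}. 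The real work goes into guaranteeing that the element produced along the glued path is genuinely parabolic rather than merely hyperbolic, which is where the symmetry of $(\iota,\delta)$ and the connectedness of $\Gamma(\wh{\mc S})$ are used together.
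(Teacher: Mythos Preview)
Your setup is right and matches the paper: the nonparabolic and non-Neumann parts are exactly as you say, and in the maximality argument the three cases (identification within one orbit; identification across orbits with fiber larger than two; the ``clean'' merge of the two orbits by a global offset) are the correct trichotomy. You also correctly extract the reflection symmetry $\wh\iota(k)+\wh\iota(c-k)=c$, $\delta_k=\delta_{c-k}$ of the $\wh{\mc S}$-data from the existence of such a fold.

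The gap is in the endgame. You observe that this symmetry of $(\wh\iota,\delta)$ is compatible with~(\ref{eqiota}) and conclude that the contradiction ``cannot come from the $(2,3)$-structure alone'', so you look for a closed-path argument using an inter-orbit $A$-edge to manufacture a parabolic in $G$. But you have already argued that $G$ is a Neumann subgroup of $\mc M$, hence nonparabolic; no such path can exist, and the sketch remains a plan rather than a proof. The paper instead works downstairs: the merged group $\mc U=G$ is Neumann in $\mc M$, so it carries its \emph{own} involution $\iota$ satisfying the $\delta\equiv 1$ specialisation of~(\ref{eqiota}), namely $\iota(\iota(k)+1)=\iota(k-1)-1$. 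Comparing $\iota$ with $\wh\iota$ through the fold gives $\iota(k)+\iota(n-k)=n$ for the offset $n$. These two identities for $\iota$ alone are already incompatible by a short parity argument: if $n=2m$ then $\iota(m)=m$ forces $2\iota(m+1)=2m-1$; if $n=2m+1$ a similar manipulation yields $2\iota(m)=2m+3$. So the contradiction really does come from the $(2,3)$-structure of the \emph{quotient}, with no need to chase paths or invoke the negative-determinant edges at that stage.
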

\begin{proof}
Assume that $\wh{\mc S}$ is  a Neumann subgroup of $\wh{\mc M}$. By Theorem~\ref{Ncg}, $\Gamma(\wh{\mc S})$ has precisely two infinite $C$-orbits. Therefore from Lemma~\ref{isograph} and Remark~\ref{N12o} it follows that $\mc S$ is not a Neumann subgroup of $\mc M$.

Let $(\wh\Omega, A,B,V)$ and $(\Omega,A, B)$ be a quadruple and  a triple associated with $\wh{\mc S}$ and $\mc S$, respectively. Recall that
$$\wh\Omega=\left\{\wh p\,C^kV^{\epsilon_k}:\;k\in Z,\epsilon_k\in\{0,1\}\right\}.$$
where $\wh p\,$ is the stabilized element of $(\wh\Omega, A,B,V)$.
If $\psi:\wh\Omega\longrightarrow\Omega$ is given by
$$\psi\left(\wh p\,C^kV^{\epsilon_k}\right)=pE^{\epsilon_k}C^kV^{\epsilon_k},$$
where $p$ is the stabilized element of $(\Omega,A, B)$ and $\wh S=S\cup SE$, then $\psi$ is an isomorphism between $\Gamma(\wh{\mc S})$ and $\Gamma_{\mc M}(\mc S)$ from  Lemma~\ref{isograph}.

Now take $D\in\mc M\setminus\mc S$ and denote $\mc U=\langle\mc S,D\rangle$. It is well-known (see i.e. \cite{BL2}), that $\Gamma_{\mc M}(\mc U)$ is a homomorphic image of $\Gamma_{\mc M}(\mc S)$. Let us denote the appropriate homomorphism by $\Lambda$. Then $q=\Lambda(p)$ is the stabilized element of the triple of $\mc U$. Note that all fibers of $\Lambda$ have the same cardinality and that $\{p,pD\}\subset\Lambda^{-1}(q)$. \\
We have to consider three cases:
\begin{enumerate}
\item $pD=pC^n$;
\item $pD=pEC^nV$ and $\{p,pD\}\subsetneq\Lambda^{-1}(q)$ ;
\item $pD=pEC^nV\;\;\;\mbox{ and }\;\;\;\{p,pD\}=\Lambda^{-1}(q).$
\end{enumerate}
If 1. holds then $C^n\in\mc U$, hence $\mc U$ contains   a parabolic element.\\ If 2. holds  then $\Lambda^{-1}(q)$ contains $pE^\epsilon C^{n'}V^\epsilon$ except $p$ and $pEC^nV$. It follows that in such a case $\mc U$ contains a parabolic element as well.\\ If 3. holds then $\Gamma_{\mc M}(\mc U)$ has a precisely one infinite $C$-orbit, hence, by Remark~\ref{N12o}, is a Neumann subgroup of $\mc M$. Let $\iota$ and $\wh\iota$ denote the associated with $\mc U$ and $\wh{\mc S}$, respectively, involutions of $Z$. Since $qC^kA=qC^{\iota(k)}$ and $\Lambda$ is a graph homomorphism, we have as well $\Lambda^{-1}(q)C^kA=\Lambda^{-1}(q)C^{\iota(k)}$. Now
$$\begin{array}{ll}
\Lambda^{-1}(q)C^kA&=\left\{pC^kA,pEC^{n-k}AV\right\}=\left\{\psi\left(\wh p\,C^kA\right),\psi\left(\wh p\,C^{n-k}AV\right)\right\}\\&\\
     &=\left\{\psi\left(\wh p\,C^{\wh\iota(k)}V^{\frac{1-\delta_k}{2}}\right),\psi\left(\wh p\,C^{\wh\iota(n-k)}V^{\frac{3-\delta_{n-k}}{2}}\right)\right\}\\&\\
     &=\left\{pE^{\frac{1-\delta_k}{2}}C^{\wh\iota(k)}V^{\frac{1-\delta_k}{2}},pE^{\frac{1+\delta_{n-k}}{2}}C^{\wh\iota(n-k)}V^{\frac{3-\delta_{n-k}}{2}}\right\}\\&\\
     \mbox{and} &\\&\\
\Lambda^{-1}(q)C^{\iota(k)}&=\left\{pC^{\iota(k)},pEC^{n-\iota(k)}V\right\}.
\end{array}$$
If $\delta_k=-1$ then we have
$$\left\{pC^{\iota(k)},pEC^{n-\iota(k)}V\right\}=\left\{pEC^{\wh\iota(k)}V,pE^{\frac{1+\delta_{n-k}}{2}}C^{\wh\iota(n-k)}V^{\frac{3-\delta_{n-k}}{2}}\right\}.$$
Since $E\in\wh{\mc S}$ and $\wh S$ intersects trivially with $\wh{\mc T}$, $\delta_{n-k}=-1$, hence
$$\left\{pC^{\iota(k)},pEC^{n-\iota(k)}V\right\}=\left\{pEC^{\wh\iota(k)}V,pC^{\wh\iota(n-k)}\right\}.$$
But $\mc S$ does not contain parabolic elements, thus we get
$$\iota(k)=\wh \iota(n-k)=n-\wh\iota(k).$$
If $\delta_k=1$ then similar arguments  show that $\delta_{n-k}=1$ and
$$\iota(k)=\wh\iota(k)=n-\wh\iota(n-k).$$
Substituting $k:=n-k$  in the two last equalities we get
\begin{equation}\label{iiii}
 \iota(k)+\iota(n-k)=n
 \end{equation}
for every $k\in Z$.\\
If $n=2m$ then from the above it follows that
$$\iota(m)=m\;\;\;\mbox{ and }\;\;\;\iota(m-1)+\iota(m+1)=2m.$$
Therefore, in virtue of  (\ref{eqiota}), we have
$$\iota(m+1)=\iota(\iota(m)+1)=\iota(m-1)-1,$$
which forces
$$2\iota(m+1)=2m-1,$$
a contradiction.\\
If $n=2m+1$  put $l=\iota(m+1)+1$. Then  (\ref{eqiota}) and (\ref{iiii}) yield
$$\begin{array}{ll}
(l+1)+(\iota(l)-1)&=l+\iota(l)=\iota(m+1)+1+\iota(\iota(m+1)+1)\\&\\
   &=\iota(m+1)+1+\iota(m)-1=2m+1.\end{array}$$
Applying (\ref{iiii}) one get
$$\iota(l+1)+\iota(\iota(l)-1)=2m+1.$$
Since $\iota(\iota(l)-1)=\iota(l+1)+1$,
$$m=\iota(l+1)=\iota(\iota(m+1)+2),$$
hence
$$\iota(m)-\iota(m+1)=2.$$
This equality, combined with $\iota(m)+\iota(m+1)=2m+1$, gives
$$2\iota(m)=2m+3,$$
a contradiction.

We showed that $\mc S$ is a maximal nonparabolic subgroup of $\mc M$.
\end{proof}
We aim for a description of structure of a given Neumann subgroup of $\wh{\mc M}$.\\
Let us denote $\Gamma=\Gamma(\wh{\mc S})\simeq\Gamma_{\mc M}(\mc S)$. Considering $\Gamma=\Gamma(\wh{\mc S})$ one can label the $A$-edge at the vertex $\wh p\,C^nV^\epsilon$ by $\{\Sigma_n, \Sigma_{\iota(n)}\}$.  The vertices $\wh p\,C^nV^\epsilon$ and $\wh p\,C^{\iota(n)}V^{\frac{1-\delta_n}{2}}V^\epsilon$ have the same $A$-edge which we consider as  a graphical illustration of the relation $\Sigma_n\Sigma_{\iota(n)}=1$. We adopt the convention that  the vertex $\wh p\,C^nV^\epsilon$ chooses from the label of its $A$-edge the generator $\Sigma_n$. Now if we start the $B$-cycle from $\wh p\,C^nV^\epsilon$ then, in virtue of 2. and 3.  of Proposition~\ref{prez}, the chosen by the consecutive  vertices generators  satisfy $$\Sigma_n\Sigma_{\iota(n)+(1-2\epsilon)\delta_n}\Sigma_{\iota(n-1+2\epsilon)}=1,$$
which is the relation given in 4. of Proposition~\ref{prez} in the appropriate form.  Therefore if $\ol\Gamma$ denotes the edge contraction of $\Gamma$ by the action of $B$ then $\ol\Gamma$ with the induced labeling can be understood as a graphical illustration of the presentation (\ref{prN}) of $\wh S$. Since the above labeling is two-to-one, we cannot use  $\ol\Gamma$ to indicate  independent generators of $\wh{\mc S}
$. In order to remove this obstacle observe that $AV$ carries each $B$-cycle in $\Gamma$ onto a $B$-cycle (reversing $B$-orientation) and each $A$-edge onto $A$-edge. It follows that $AV$ is an automorphism of $\ol\Gamma$. Let $\wt\Gamma$ denote the vertex contraction of $\ol\Gamma$ by the action of $AV$. Because the images of the edges with the same label always connect the same pair of vertices in $\wt\Gamma$, we identify them.\\
Although the action of $B$ on $\Gamma$ projects onto the  identity on $\ol\Gamma$ we still can consider the action of $C$ on $\ol\Gamma$ as a projection of the action of $C$ on $\Gamma$.\\
A pair of paths in the cuboid graph is \emph{quasi-Eulerian} if
\begin{itemize}
\item each of paths is infinite and reduced except at vertices of degree $1$;
\item every oriented edge connecting distinct vertices belongs to the precisely one of paths in the pair.
\end{itemize}
 If  $\Gamma=\Gamma(\wh{\mc S})$ consists of two orbits, which is the case when $\wh{\mc S}$ is assumed to be Neumann,  then $\ol \Gamma$ necessarily possess a  quasi-Eulerian  pair of paths.
\begin{lemma}\label{infst} Let $\wh{\mc S}$ be a Neumann subgroup of $\wh{\mc M}$.
If $\overline T$ is
\begin{enumerate}
\item either an attached cuboidal finite graph or
\item an attached infinite  simple tree
\end{enumerate}
 in $\overline{\Gamma}$ then the set of vertices of $\ol T$  is contained in the precisely one $C$-orbit.
\end{lemma}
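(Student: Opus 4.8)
Throughout, write $\Omega_0=\{\wh p\,C^n:n\in Z\}$ and $\Omega_1=\{\wh p\,C^nV:n\in Z\}$ for the two $C$-orbits guaranteed by Theorem~\ref{Ncg}, let $P_0,P_1$ be the corresponding paths of the quasi-Eulerian pair, and let $b$ be the edge attaching $\ol T$ to the rest of $\ol\Gamma$. By the quasi-Eulerian property every oriented edge joining distinct vertices lies in exactly one of $P_0,P_1$; in particular $b$, being one undirected edge, is run exactly twice in total by the pair. The plan is to reduce the statement to a dichotomy among the $A$-edges. From $\wh p\,C^nA=\wh p\,C^{\iota(n)}V^{(1-\delta_n)/2}$ each $A$-edge is either \emph{orbit-preserving} (when $\delta_n=1$: both flags in one orbit, so it is run twice, once each way, by a single path) or \emph{orbit-switching} (when $\delta_n=-1$: one flag in each orbit, so it is run once by each path). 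By 2.\ of Proposition~\ref{prez} the three $\delta$'s in any $B$-cycle multiply to $1$, so each vertex carries an even number of orbit-switching edges, and a vertex lies in both orbits exactly when it meets one. Hence it suffices to prove that at most one of $P_0,P_1$ traverses an edge of $\ol T$: for then, by the covering property, every edge of $\ol T$ belongs to that single path and is orbit-preserving, so $\ol T$ is contained in one $C$-orbit.

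Case (1), $\ol T$ finite, is then immediate. A path that traverses an edge of the finite set $\ol T$ must also leave $\ol T$, since both of its ends eventually escape every finite set; thus each penetrating path crosses $b$ an even number of times, hence at least twice. If both $P_0$ and $P_1$ entered $\ol T$ they would together cross $b$ at least four times, whereas the pair crosses $b$ exactly twice. Therefore exactly one path penetrates $\ol T$, running through $b$ twice in a single there-and-back excursion, and by the reduction $\ol T$ lies in one orbit. This uses neither acyclicity nor the precise shape of $\ol T$, so it covers the cuboidal finite graphs of (1) as stated.

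For case (2), $\ol T$ an infinite simple tree, the crossing count no longer forces a return, and this is where the real difficulty lies: a penetrating path may run off to infinity inside $\ol T$ after a single crossing of $b$, so the two available crossings can be split one apiece, and I must \emph{exclude}, rather than merely count, the possibility that both paths penetrate. Assuming for contradiction that both do, each acquires a ray running to infinity inside $\ol T$. Because $\ol T$ is simply connected while the quasi-Eulerian pair covers every oriented edge exactly once, acyclicity forces these two rays to share a single end and to run, in opposite senses, along one geodesic ray all of whose edges are orbit-switching, the finite side-branches being swept out in both directions by whichever single path detours into them. It then remains to convert this shared orbit-switching end into a forbidden element: writing the common vertices of the two rays as $B$-cycles containing $\wh p\,C^{a_k}$ and $\wh p\,C^{b_k}V$ for $k\to\infty$ and using $\delta_{\iota(n)}=\delta_n$ together with the involution identity (\ref{eqiota}), I expect the indices $a_k,b_k$ to obey rigid recursions of exactly the type produced in the proof of Theorem~\ref{maxnonpar}, forcing either a nontrivial power of $C$ or a conjugate of $V$ or $CV$ into $\wh{\mc S}$. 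Any such element is a nonidentity member of an isotropy subgroup, which is impossible since $\wh{\mc S}$ is anisotropy (Proposition~\ref{Nmnp}, Corollary~\ref{np}); this contradiction excludes the bad alternative, so at most one path enters $\ol T$ and the reduction again yields that $\ol T$ lies in a single $C$-orbit. The main obstacle is precisely this extraction step — turning the combinatorial ``common orbit-switching end'' into an explicit isotropy element — because it requires carrying out the $\iota$/$\delta$ bookkeeping globally along the whole ray, rather than at a single index as in the finite computations of Theorem~\ref{maxnonpar}.
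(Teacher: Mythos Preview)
Your treatment of case~(1) is fine and matches the paper's (the paper simply declares it obvious from the quasi-Eulerian property; your bridge-crossing count makes this explicit).

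For case~(2), however, you have a genuine gap, and you have also overestimated its depth. You correctly reduce to the situation where both $P_0$ and $P_1$ enter $\ol T$ and share an orbit-switching end, and you correctly anticipate that the contradiction must be an isotropy element appearing in $\wh{\mc S}$. But you then stop, saying the extraction ``requires carrying out the $\iota/\delta$ bookkeeping globally along the whole ray''. It does not: the paper extracts the contradiction \emph{locally}, at the attaching edge, with a single application of the involution identity.

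Here is the missing step. Let $v_0$ be the attaching vertex and pick $\wh p\,C^{n}V^{\epsilon}\in P^{-1}(v_0)$ whose $A$-edge enters $\ol T$; set $l_0=\iota(n)$, so $\delta_n=-1$ and the $A$-edge lands at $\wh p\,C^{l_0}V^{\epsilon+1}$. Because $\ol T$ is a simple infinite tree attached at one vertex, $P^{-1}(\ol T)$ absorbs exactly one half-ray of each $C$-orbit, giving the characterisation
\[
\wh p\,C^{k}V^{\epsilon}\in T\ \Leftrightarrow\ (-1)^{\epsilon}(k-n)\ge 0,\qquad
\wh p\,C^{l}V^{\epsilon+1}\in T\ \Leftrightarrow\ (-1)^{\epsilon}(l-l_0)\ge 0.
\]
Now use $\iota^2=\mathrm{id}$ and $\delta_{\iota(n)}=\delta_n$: the $A$-edge at $\wh p\,C^{l_0}V^{\epsilon}$ goes to $\wh p\,C^{n}V^{\epsilon+1}$. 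These two endpoints are $A$-adjacent, hence both lie in $T$ or both lie outside. Feeding them into the half-ray inequalities forces $(-1)^\epsilon(l_0-n)\ge 0$ and $(-1)^\epsilon(n-l_0)\ge 0$ simultaneously (the ``both outside'' option gives two strict opposite inequalities, impossible), so $n=l_0=\iota(n)$ with $\delta_n=-1$. Then $\Sigma_n=C^nAVC^{-n}\in\wh{\mc S}$ is an involution of negative determinant, hence lies in an isotropy subgroup, contradicting Corollary~\ref{np}.

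So the obstacle you flagged dissolves: no recursion along the ray is needed, only the observation that the tree swallows complementary half-rays together with one use of $\iota(\iota(n))=n$.
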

\begin{proof} Since $\ol \Gamma$ is quasi-Eulerian, 1. is obvious.

Let $P:\Gamma\longrightarrow \ol\Gamma$ denote a natural projection. If $\ol T$ is infinite and attached at the vertex $v_0$ then  let $\wh p\, C^{n}V^{\epsilon}\in P^{-1}(v_0)$, $\epsilon\in\{0,1\}$, be such that $\wh p\, C^{n}V^{\epsilon}A\in P^{-1}\left(\ol T\right)$ and denote $T=P^{-1}\left(\ol T\setminus\{v_0\}\right)\cup\{\wh p\, C^{n}V^{\epsilon}\}$.  Since $\ol T$ is a simple infinite tree it consists of   a one-sided sequence $(v_n)_{n\geq 0}$ of vertices connected sequently by $P$-images of $A$-edges and finite trees attached to $v_n$'s. If $\ol T$, hence $T$ as well, intersects the both orbits then from 1. and the quasi-Eulerian property it follows  that for every $i\geq0$ there are $k_i,l_i\in Z$ such that $\wh p\, C^{k_i}V^{\epsilon}\in P^{-1}(v_i)$, $\wh p\,C^{l_i}V^{\epsilon+1}\in P^{-1}(v_{i+1})$ and $\wh p\, C^{k_i}V^{\epsilon}A=\wh p\,C^{l_i}V^{\epsilon+1}$. In particular we have $k_0=n$. Observe that
\begin{equation}\label{cn}
\wh p\,C^kV^\epsilon,\;\wh p\,C^lV^{\epsilon+1}\in T\mbox{ iff }\left[(-1)^\epsilon(k-n)\geq0\mbox{ and }(-1)^\epsilon(l-l_0)\geq0\right].
\end{equation}
Now since $\iota(n)=l_0$, we have that
$$\wh p\,C^{l_0}V^\epsilon A=\wh p\,C^{n}V^{\epsilon+1},$$
hence either both $\wh p\,C^{l_0}V^\epsilon$ and $\wh p\,C^{n}V^{\epsilon+1}$ are in $T$ or both are outside of $T$. Thus  (\ref{cn}) yields $n=l_0$ which, according to Corollary~\ref{np}, is a  contradiction.
\end{proof}
If an edge of $\Gamma$ is labeled by an involution then its $P$-image is a loop in $\ol\Gamma$, and every loop in $\ol\Gamma$ is of this form. The only vertices of valence $1$ are those whose $A$-edges are labeled by the pair of elements of order $3$.  Now remove from $\ol\Gamma$ all such loops and all vertices of valence $1$ and denote by $\ol\Gamma_0$ resulting graph. Let
$$%\begin{equation}\label{E0}
\wh L_0=\{\Sigma_n:\; \Sigma_n^2=1\mbox{ or }\Sigma_n^3=1\}.
$$%\end{equation}
We already know that  $\wh L_0\subset\mc S$. \\
Let $\wt\Gamma_0$ be an image of $\ol\Gamma_0$ by the natural projection, i.e. $\wt\Gamma_0$ is obtained from $\wt\Gamma$ by removing all edges labeled by the elements of  $\wh L_0$.\\
From the above lemma one immediately get the following.
\begin{corollary}\label{betti}
If $\wh{\mc S}$ be a Neumann subgroup of $\wh{\mc M}$ then $\ol\Gamma_0$ and $\wt\Gamma_0$   have the  Betti number not less than $1$.
\end{corollary}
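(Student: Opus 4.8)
The plan is to read ``Betti number at least $1$'' as ``the graph contains a cycle'', i.e.\ is not a forest, and to exhibit such a cycle out of the two $C$-orbits of $\ol\Gamma$. Note first that passing from $\ol\Gamma$ to $\ol\Gamma_0$ (and from $\wt\Gamma$ to $\wt\Gamma_0$) only deletes the edges carrying the torsion generators $\wh L_0$: the involution-loops (indices with $\iota(n)=n$) and the order-$3$ leaves. Every edge labelled by a generator of infinite order is therefore retained.

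The crucial observation is the existence of a surviving edge joining the two orbits. Since $\wh{\mc S}\not\subset\mc M$, Theorem~\ref{Ncg} gives that $\Gamma(\wh{\mc S})$ is connected, so there is an index $n$ with $\delta_n=-1$; its $A$-edge runs from $\wh p\,C^n$ to $\wh p\,C^{\iota(n)}V$, crossing from one orbit to the other. Since $\det A=\det B=1$ and $\det V=-1$, we have $\det\Sigma_n=(-1)^{(1-\delta_n)/2}$, so a generator with $\delta_n=-1$ has negative determinant; hence it is neither an involution (a negative-determinant involution is conjugate to $V$ or $CV$ and so lies in an isotropy subgroup, against Corollary~\ref{np}) nor of order $3$ (as $\det$ maps onto $\{\pm1\}$, an element of order $3$ has determinant $1$). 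Thus $\Sigma_n$ has infinite order and its $A$-edge is neither a loop nor a leaf, so it lives in $\ol\Gamma_0$. Being no involution also forces $\iota(n)\neq n$, while $\delta_{\iota(n)}=\delta_n=-1$ by part~1 of Proposition~\ref{prez}; hence the index $\iota(n)$ supplies a \emph{second} surviving crossing edge, from $\wh p\,C^{\iota(n)}$ to $\wh p\,C^{n}V$.

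For $\ol\Gamma_0$ I would argue by contradiction. Removing loops and valence-$1$ vertices from the connected graph $\ol\Gamma$ keeps it connected, so if its Betti number were $0$ it would be an infinite tree (a double edge would already produce a cycle, so we may assume the graph is simple). But the crossing edge makes $\ol\Gamma_0$ meet both $C$-orbits, contradicting Lemma~\ref{infst}, by which an infinite simple tree sitting in $\ol\Gamma$ is confined to a single $C$-orbit. Equivalently, and more concretely, the two crossing edges together with a backbone arc $P_0$ inside the orbit of $\wh p\,C^{n}$ and a backbone arc $P_1$ inside the orbit of $\wh p\,C^{\iota(n)}V$ close up into a genuine cycle (the interior vertices of $P_0,P_1$ have valence $\geq 2$, hence are not deleted leaves). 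Therefore $\ol\Gamma_0$ has Betti number at least $1$.

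It remains to transport the cycle through the vertex contraction by $AV$ to handle $\wt\Gamma_0$. There the two crossing edges carry the common label $\{\Sigma_n,\Sigma_{\iota(n)}\}$ and are identified into a single edge, but the endpoints $\wh p\,C^{\iota(n)}V$ and $\wh p\,C^{n}V$ of the arc $P_1$ form an $AV$-pair (one checks $\wh p\,C^{\iota(n)}V(AV)=\wh p\,C^{n}V$ using $VAV=A$), so $P_1$ closes into a loop in $\wt\Gamma$ and, its edges being non-torsion and joining distinct vertices, persists in $\wt\Gamma_0$. The step I expect to be the main obstacle is precisely this $\wt\Gamma_0$ bookkeeping: one must verify that collapsing the two crossing edges does not annihilate the whole cycle and that the surviving arc remains homologically non-trivial in $\wt\Gamma_0$ — equivalently, that the infinite-order generator $\Sigma_n$ really contributes an independent loop and is not absorbed into the tree part. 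Matching the hypotheses ``attached''/``simple'' of Lemma~\ref{infst} to the hypothetical tree in the $\ol\Gamma_0$ argument calls for the same care.
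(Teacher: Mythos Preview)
Your main line of argument --- the existence of an $A$-edge with $\delta_n=-1$, the verification that $\Sigma_n$ then has negative determinant and hence infinite order (ruling out both the involution and order-$3$ cases), so that this edge survives in $\ol\Gamma_0$, followed by a contradiction with Lemma~\ref{infst} if $\ol\Gamma_0$ were a tree --- is exactly what the paper intends by ``from the above lemma one immediately gets the following''. The paper gives no further detail, so your unpacking is a genuine expansion of the intended proof rather than a different route.

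Two remarks on the supplementary material. First, the ``concrete cycle'' you build from the two crossing edges and backbone arcs $P_0,P_1$ is not quite safe as written: the $C$-path in $\ol\Gamma$ may well pass through vertices that become valence~$1$ (or through loops) and are deleted in $\ol\Gamma_0$, so there is no a priori path in $\ol\Gamma_0$ joining the feet of the two crossing edges inside one orbit. The contradiction argument via Lemma~\ref{infst} avoids this issue and is the cleaner way; you should drop the explicit-cycle paragraph or else argue that connectedness of $\ol\Gamma_0$ itself provides the needed arcs. Second, your worry about the ``attached'' hypothesis in Lemma~\ref{infst} is well placed: strictly, the lemma speaks of an attached subtree of $\ol\Gamma$, not of the case where $\ol\Gamma_0$ itself is a tree. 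This is easily patched --- pick any edge in the hypothetical tree $\ol\Gamma_0$, and each of the two components of its complement is an attached infinite simple tree in $\ol\Gamma$, so Lemma~\ref{infst} confines each to a single $C$-orbit; the crossing edge then forces the two components into different orbits, but the $C$-path in $\ol\Gamma$ must re-enter each component infinitely often on both sides, a contradiction. For $\wt\Gamma_0$ the paper is equally terse; your observation that the endpoints of $P_1$ form an $AV$-pair is correct, but the same caveat about $P_1$ surviving in $\ol\Gamma_0$ applies, so again the contradiction route (lift a hypothetical spanning tree of $\wt\Gamma_0$ to a tree in $\ol\Gamma_0$ and invoke the $\ol\Gamma_0$ result) is more robust than the explicit cycle.
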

It is not difficult to observe that every cycle in $\ol\Gamma$ is transformed by $AV$ either to another cycle or to itself. Therefore from Lemma~\ref{infst} we immediately get the following.
\begin{corollary}\label{postac}
If $\wh{\mc S}$ is a Neumann subgroup of $\wh{\mc M}$ then $\ol\Gamma_0$ has the infinite Betti number  iff $\ol\Gamma_0$ is a cuboidal graph with no attached infinite trees. The same statement holds for $\wt\Gamma_0$.

 The Betti number of $\ol\Gamma_0$ is finite iff the Betti number of $\wt\Gamma_0$ is finite. In such a case   $\ol\Gamma_0$ is a finite cuboidal graph with precisely two infinite trees attached and  $\wt\Gamma_0$ is a finite cuboidal graph with precisely one infinite tree attached.
\end{corollary}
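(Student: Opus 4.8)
The four assertions reduce to a single local question: how can the bi-infinite, reduced (quasi-Eulerian) path of a $C$-orbit traverse an attached infinite simple tree? My plan is to settle that point first, read off the dichotomy for one orbit, and then distribute the conclusion over the two orbits of $\ol\Gamma_0$ and their $AV$-quotient $\wt\Gamma_0$.

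First I would record two preliminary facts. The graph $\ol\Gamma_0$ is connected, since $\Gamma(\wh{\mc S})$ is connected (Theorem~\ref{Ncg}) and deleting the order-two loops and the order-three valence-one vertices cannot disconnect it. It is also infinite: the two $C$-orbits are bi-infinite, so $\ol\Gamma$ has infinitely many vertices, and since each cubic vertex carries at most three incident $A$-edges it hosts at most three order-three leaves; hence the cubic vertices themselves are infinite in number, and, being of valence three in $\ol\Gamma$, they all survive into $\ol\Gamma_0$. The same holds for $\wt\Gamma_0$. By Corollary~\ref{betti} both graphs already have Betti number at least one.

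The key step is the observation that a bi-infinite path that is reduced except at leaves can cover, each oriented edge once, an attached infinite simple tree only by spending \emph{both} of its ends on it: it must run out to infinity along the ray of the tree in one direction and return along that ray from infinity in the other, there being no leaf at the infinite end at which to turn around. Granting this, consider the single quasi-Eulerian path obtained by folding the quasi-Eulerian pair of $\ol\Gamma_0$ under $AV$, that is, the path of $\wt\Gamma_0$. If $\wt\Gamma_0$ carries no attached infinite tree, then all of its infinitely many edges lie in the cyclic part and the path meets infinitely many independent cycles, so the Betti number of $\wt\Gamma_0$ is infinite; conversely, one attached infinite tree absorbs both ends of the path, leaving only finitely many edges for the cyclic core, so the Betti number of $\wt\Gamma_0$ is finite and no second infinite tree can be accommodated. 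This proves assertion~2 and shows that in the finite-Betti case $\wt\Gamma_0$ is a finite cuboidal core with exactly one attached infinite tree.

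Finally I would transfer the statement across the double cover $\ol\Gamma_0\to\wt\Gamma_0$ determined by $AV$, which is $2$-to-$1$ and has no fixed vertices because $AV$ exchanges the two orbits $\Omega_0,\Omega_1$. The Betti numbers of a connected graph and of a connected double cover of it are finite or infinite together, giving assertion~3; a tree, being simply connected, lifts to two disjoint copies, so the single infinite tree of $\wt\Gamma_0$ lifts to exactly two infinite trees in $\ol\Gamma_0$, lying in the two different orbits and interchanged by $AV$ (in agreement with Lemma~\ref{infst}, which already confines each infinite tree to one orbit). Running the same end-counting argument directly on the two quasi-Eulerian paths of $\ol\Gamma_0$ then yields assertion~1 and the count of precisely two infinite trees. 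I expect the principal obstacle to be the rigorous form of the local step --- proving that a reduced quasi-Eulerian path is forced to consume both of its ends on a single infinite simple tree, and therefore cannot simultaneously sustain an infinite cyclic part --- since everything else is either elementary graph theory or a direct appeal to Lemma~\ref{infst} and the covering $\ol\Gamma_0\to\wt\Gamma_0$.
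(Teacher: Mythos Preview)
Your argument is essentially the fleshing-out of the paper's two-line proof (``every cycle in $\ol\Gamma$ is transformed by $AV$ either to another cycle or to itself; therefore from Lemma~\ref{infst} we immediately get the following''), and your end-counting on the quasi-Eulerian paths is the right way to make ``immediately'' precise. The overall structure --- infinite attached tree forces both ends of a $C$-path into it, hence finite exterior; no infinite tree forces an infinite cyclic core; then transfer between $\ol\Gamma_0$ and $\wt\Gamma_0$ via $AV$ --- is correct and matches the paper's intent.

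One point needs repair. You justify that the quotient $\ol\Gamma_0\to\wt\Gamma_0$ is a fixed-point-free $2$-to-$1$ cover by saying ``$AV$ exchanges the two orbits $\Omega_0,\Omega_1$''. This is false in the present setting: for $\wh{\mc S}\not\subset\mc M$ there exist $n$ with $\delta_n=-1$, and then $\wh p\,C^n\cdot AV=\wh p\,C^{\iota(n)}\in\Omega_0$, so $AV$ does \emph{not} swap $\Omega_0$ and $\Omega_1$ globally (that statement is made in Remark~\ref{N12o} only for Neumann subgroups contained in $\mc M$). Moreover, vertices of $\ol\Gamma$ are $B$-orbits, which may already meet both $\Omega_j$. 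The correct reason $AV$ has no fixed vertex on $\ol\Gamma$ is Proposition~\ref{2}(2): a fixed $B$-orbit would mean $AVB^{-k}$ (some $k\in\{0,1,2\}$) lies in a conjugate of $\wh{\mc S}$, and each of $AV$, $AVB$, $AVB^{2}$ is an involution of determinant $-1$, which $\wh{\mc S}$ cannot contain. With this correction your covering argument goes through, and the lifting of the single infinite tree of $\wt\Gamma_0$ to two $AV$-conjugate trees in $\ol\Gamma_0$ --- each confined to one $C$-orbit by Lemma~\ref{infst} --- gives the stated count.
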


Now we are in a position to describe the structure of a Neumann subgroup of the extended modular group in details. We say that a subgroup of the extended modular has \emph{$(k,l,m)$-structure}, $k,l,m\in N_0\cup\{\infty\}$, if it is a free product of $k$ subgroups of order $2$, $l$ subgroups of order $3$ and $m$ infinite cyclic subgroups.
\begin{theorem}\label{NnN}
If  $\wh{\mc S}$ is a Neumann subgroup of $\wh{\mc M}$ that is not entirely contained in $\mc M$ then
  \begin{enumerate}
  \item $\wh{\mc S}$ has $(\wh r_2,\wh r_3,\wh r_\infty)$-structure subject to the conditions that
      \begin{enumerate}
      \item $\wh r_2+\wh r_3+\wh r_\infty=\infty$;
      \item  $\wh r_\infty\geq1$;
      \item if $\tilde \beta$ stands for the Betti number of $\wt\Gamma_0$, then $\wh r_\infty=\tilde\beta$;
       \item all independent generators of finite order are elliptic elements of the modular group.
            \end{enumerate}
    \item if $\mc S$ has $(r_2, r_3,r_\infty)$-structure  then
     \begin{enumerate}
      \item $(r_2,r_3,r_\infty)=(2\wh r_2,2\wh r_3,2\wh r_\infty-1)$;
      \item if $\beta$ stands for the Betti number of $\ol\Gamma_0$, then $r_\infty=\beta$.
      \end{enumerate}
          %\begin{enumerate}
     % \item $r_i=2\wh r_i$, $i=1,2$;
     % \item $\wh r_\infty\leq r_\infty\leq 2\wh r_\infty-1$;
     % \item if $\beta$ stands for the Betti number of $\ol\Gamma_0$, then $r_\infty=\beta$.
     % \end{enumerate}
      \end{enumerate}
\end{theorem}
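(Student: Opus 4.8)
The plan is to read off both free-product decompositions from the coset graphs by the generator-selection method of Brenner and Lyndon \cite{BL1}, and then to relate the two graphs through the involution $AV$, which I will show induces a connected free double cover.

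Part~1(a) is immediate: by part~3 of Proposition~\ref{2} the group $\wh{\mc S}$ is an \emph{infinite} free product of copies of $\mc C_2$, $\mc C_3$ and the infinite cyclic group, so $\wh r_2+\wh r_3+\wh r_\infty=\infty$, and the triple $(\wh r_2,\wh r_3,\wh r_\infty)$ is well defined by the uniqueness in the Kurosh theorem. The engine for the rest is the dictionary furnished by the one-to-one labelling of the edges of $\wt\Gamma$ (respectively $\ol\Gamma$) by the generators $\Sigma_n$ of part~4 of Proposition~\ref{prez}: having fixed a spanning tree, the loops account for the factors of order $2$, the valence-$1$ vertices for the factors of order $3$, and the remaining cotree edges for the free factors, so that the free rank equals the first Betti number of the graph obtained by deleting all loops and valence-$1$ vertices. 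Applied to $\wt\Gamma$ this yields part~1(c), $\wh r_\infty=\tilde\beta$, whence $\wh r_\infty\ge1$ (part~1(b)) follows from Corollary~\ref{betti}; applied to $\ol\Gamma\simeq\Gamma_{\mc M}(\mc S)$ (Lemma~\ref{isograph}) it yields part~2(b), $r_\infty=\beta$. Part~1(d) is then quick: the independent generators of finite order belong to $\wh L_0$, and we already know $\wh L_0\subset\mc S\subset\mc M$, so, being of finite order in $\mc M=PSL(2,Z)$, they are elliptic.

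The comparison in part~2 rests on the involution $AV$, which corresponds to the element $\omega\nu$ of negative determinant. Since $\wh{\mc S}$ is anisotropy, it contains no conjugate of $\omega\nu$: the latter is an involution of negative determinant, hence conjugate to $V$ or $CV$ and so a member of an isotropy subgroup, which is excluded by Corollary~\ref{np}. Consequently $AV$ acts \emph{freely} on $\ol\Gamma$, since a coset fixed by $AV$ would force such a conjugate into $\wh{\mc S}$. Because $\wh{\mc S}$ is not contained in $\mc M$, the graph $\ol\Gamma$ is connected (Theorem~\ref{Ncg}), and it remains connected after the loops and leaves are deleted; hence both $\ol\Gamma\to\wt\Gamma$ and $\ol\Gamma_0\to\wt\Gamma_0=\ol\Gamma_0/\langle AV\rangle$ are connected free double covers. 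Each loop and each valence-$1$ vertex of $\wt\Gamma$ therefore lifts to exactly two of its kind in $\ol\Gamma$, giving $r_2=2\wh r_2$ and $r_3=2\wh r_3$; equivalently one applies the Kurosh theorem to the index-two normal subgroup $\mc S\lhd\wh{\mc S}$ (part~1 of Proposition~\ref{2}), which by~1(d) contains every torsion factor, so that each such factor contributes its two conjugates. For the free part, $\pi_1(\ol\Gamma_0)$ is an index-two subgroup of the free group $\pi_1(\wt\Gamma_0)=F_{\tilde\beta}$, so Nielsen--Schreier gives $\beta=2\tilde\beta-1$; with~1(c) and~2(b) this reads $r_\infty=2\wh r_\infty-1$, which finishes part~2(a). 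The identity persists when the ranks are infinite, since an index-two subgroup of $F_\infty$ is again $F_\infty$ and $2\cdot\infty-1=\infty$.

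I expect the main obstacle to be making the Brenner--Lyndon dictionary precise in this ``quasi-Eulerian'' two-orbit setting, where by Corollary~\ref{postac} the core graphs $\ol\Gamma_0$ and $\wt\Gamma_0$ may carry attached infinite trees. One must verify that deleting the loops and valence-$1$ vertices leaves a connected graph whose cycle rank genuinely computes the free rank --- this is exactly where Lemma~\ref{infst} is needed, to confine each attached tree to a single $C$-orbit --- and that the covering $\ol\Gamma_0\to\wt\Gamma_0$ is truly two-sheeted and connected rather than split. This last point is where the hypothesis $\wh{\mc S}\not\subset\mc M$ (through Theorem~\ref{Ncg}) and the freeness of $AV$ (through anisotropy, Corollary~\ref{np}) are indispensable.
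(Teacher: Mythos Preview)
Your Part~1 follows the paper's approach: both read off the free-product structure of $\wh{\mc S}$ from the one-to-one labelling of $\wt\Gamma$ via the Brenner--Lyndon method and use Corollary~\ref{betti} for $\wh r_\infty\ge1$. The paper is more explicit about the ``obstacle'' you flag: it takes a maximal tree $\wt\Theta_1\subset\wt\Gamma_0$ and within it a minimal union $\wt\Theta_2$ of $s$ simply infinite rays, obtaining $\wh r_\infty=\tilde\beta+(s-1)$, and then invokes Corollary~\ref{postac} to force $s=1$ when $\tilde\beta<\infty$. Your remark that Lemma~\ref{infst} and Corollary~\ref{postac} are needed here is exactly right, but you should carry out this reduction rather than leave it as a caveat.

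For Part~2 you take a genuinely different route from the paper. The paper chooses a specific infinite-order generator $E\in\wh L_\infty$ of negative determinant and writes down explicit independent generators of $\mc S$, namely $L_0=\wh L_0\,\dot\cup\,E\wh L_0E^{-1}$ and $L_\infty=(E\wh L_\infty\,\dot\cup\,\wh L_\infty E^{-1})\setminus\{1\}$, from which $(r_2,r_3,r_\infty)=(2\wh r_2,2\wh r_3,2\wh r_\infty-1)$ is read off by counting. You instead observe that $AV$ acts freely on $\ol\Gamma$ (your argument via Corollary~\ref{np} is correct: a fixed $B$-cycle would yield a negative-determinant involution in $\wh{\mc S}$), so $\ol\Gamma_0\to\wt\Gamma_0$ is a connected double cover and Nielsen--Schreier gives $\beta=2\tilde\beta-1$; combined with Kurosh for the index-two inclusion $\mc S\lhd\wh{\mc S}$ and 1(d), this yields 2(a) and 2(b). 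Both arguments are valid. The paper's is constructive (it hands you the generators), while yours is more structural and makes the role of the $AV$-cover transparent. One small suggestion: rather than invoking the Brenner--Lyndon dictionary a second time on $\ol\Gamma$ to get $r_\infty=\beta$ directly (which would require redoing the $s=1$ analysis in the two-orbit setting), it is cleaner to deduce 2(b) \emph{after} 2(a), via $r_\infty=2\wh r_\infty-1=2\tilde\beta-1=\beta$.
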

\begin{proof}
 Recall that the labeling  of edges of $\wt\Gamma$ by elements of $\{\Sigma_n:\;n\in Z\}$ is bijective and  that at each vertex the cycle of edges gives a relation described in 4. of Theorem~\ref{prez}. Therefore we can use the method developed in \cite{BL1} to select the set of edges of $\wt\Gamma$ whose labels form a set of independent generators of $\wh{\mc S}$.

  Since the edges labeled by $\wh L_0$ can only occur at the ends of finite attached trees, the generators from $\wh L_0$  are independent. Of course all remaining edges are labeled by hyperbolic elements. \\
Note that we have removed from $\wt\Gamma$  all  loops labeled by involutions of the positive determinant but  still $\wt\Gamma_0$ may contain loops labeled by hyperbolic elements. We take those loops into account (as cycles) while we count the Betti number. \\
Let $\wt\Theta_1$ be a maximal tree in $\wt\Gamma_0$ and $\wh L_1$ be the set of labels of $E(\wt\Gamma_0)\setminus E(\wt\Theta_1)$. Let $\wt\Theta_2$ denote the union of minimal number, say $s\in N\cup\{\infty\}$, of simply infinite trees contained in $\wt\Theta_1$ and $\wh L_2$ be the set of labels of $E(\wt\Theta_1)\setminus E(\wt\Theta_2)$. Observe that the cardinality of $\wh L_1$ is equal to $\tilde\beta$ and the cardinality of $\wh L_2$ is equal to $s-1$ and that $\wh L_\infty=\wh L_1\cup \wh L_2$ consists of hyperbolic elements.

The elements of
$$\wh L=\wh L_0\cup \wh L_\infty$$
are independent and generate $\wh{\mc S}$.

We have $$\wh r_2+\wh r_3=\mid\wh L_0\mid\;\;\;\mbox{ and }\;\;\; \wh r_\infty=\mid\wh L_\infty\mid=\tilde\beta+s-1.$$  In virtue of Corollary~\ref{postac}, if $\tilde\beta<\infty$ then $s=1$, hence the latter equality above gives (c) of 1. If $\tilde\beta=\infty$ then there are infinitely many finite trees attached and it follows that $\mid\wh L_0\mid=\infty$, thus (a) of 1. holds.  Corollary~\ref{betti} implies (b) of 1. Now (d) of 1. follows from the definition of $\wh L_0$.

We pass to the proof of 2. Observe that if $E\in\wh{\mc S}\setminus\mc S$ then the set
$$\left\{ E^\epsilon\Sigma_nE^{-\epsilon\delta_n+\frac{\delta_n-1}{2}}:\;n\in\mathbb Z,\,\epsilon\in\{0,1\}\right\}$$
generates $\mc S$.

In order to realize the structure of  $\mc S$ take $E\in \wh L_\infty$ and consider the sets
$$L_0=\wh L_0\stackrel{\cdot}{\cup}E\wh L_0E^{-1},\;\;\;L_\infty=(E\wh L_\infty \stackrel{\cdot}{\cup}\wh L_\infty E^{-1})\setminus\{1\}.$$
Obviously elements of $L_0\cup L_\infty$ are independent generators of $\mc S$, therefore (a) and (b) of 2. follow immediately.
\end{proof}

In \cite{BL1} and \cite{S}, independently, it was shown that  any admissible structure for a Neumann subgroup of the modular group can be realized.  Analogously to the situation  it is possible to realize any admissible structure for a Neumann subgroup of the extended modular group. Recall that in order to describe some Neumann subgroup it is enough to define appropriate involution $\iota$ of the set of integers together with the sequence $\delta_n\in\{-1,1\}^\mathbb Z$ such that (\ref{eqiota}) is satisfied. The recursive method of such a construction is given in \cite{MS2} and \cite{MS3} .  For the sickness of completeness we describe this method below.\\
 A map $\tilde\iota:\{k,\ldots,k+l\}\longrightarrow\{k,\ldots,k+l\}$, $k\in \mathbb Z$, $l\geq 0$, is called a \emph{generating involution} if:
\begin{itemize}
  \item $\tilde\iota$ is an involution of $\{k,\ldots,k+l\}$;
  \item $\tilde\iota(k)=k+l$;
  \item $\tilde\iota$ satisfies (\ref{eqiota}) for each $n=k,\ldots,k+l-1$.
\end{itemize}
 Note that we can freely shift the domain of a generating involution along  integers.
 Given two generating involutions one may construct another one. Indeed, let $$\iota_j:\{k_j,\ldots,k_j+l_j\}\longrightarrow\{k_j,\ldots,k_j+l_j\},$$ $j=0,1$ be generating involutions with $k_1=k_0+l_0+1$.
 Define $$\tilde\iota=\iota_0\sqcup\iota_1:\{k_0-1,\ldots,k_1+l_1+1\}\longrightarrow\{k_0-1,\ldots,k_1+l_1+1\}$$ by
\begin{itemize}
  \item $\tilde\iota(k_0-1)=k_1+l_1+1$;
    \item $\tilde\iota\mid_{\{k_j,\ldots,k_j+l_j\}}=\iota_j$, $j=0,1$.
\end{itemize}

Let us choose   a sequence  of generating involutions $\iota_n:\{k_n,\ldots,k_n+l_n\}\longrightarrow\{k_n,\ldots,k_n+l_n\}$, $n\in \mathbb N$ satisfying $k_0=-1$, $k_1=l_0$, $k_{n+1}=k_n+l_n+2$ for $n\geq1$. We define an involution  $\iota=\bigsqcup_{n=0}^\infty\iota_n:\mathbb Z\longrightarrow\mathbb Z$ as a "limit" construction:
\begin{equation}\label{siota}
  \iota_0,\;\iota_0\sqcup\iota_1,\;\iota_0\sqcup\iota_1\sqcup\iota_2,\, \ldots\, ,\iota_0\sqcup\iota_1\sqcup\iota_2\sqcup\ldots\sqcup\iota_n,\,\ldots\;,
\end{equation}
i.e. we require that $\iota\mid_{\{-n-1,\ldots,k_{n}+l_{n}+1\}}=\iota_0\sqcup\iota_1\sqcup\iota_2\sqcup\ldots\sqcup\iota_{n}$ for each $n\geq1$.
If we assume that $\delta_{k_n}=1$ for all $n$ then it follows immediately from the construction that such defined $\iota$ satisfies (\ref{eqiota}).
\begin{theorem}\label{str}
Let $\wh r_2$, $\wh r_3$, $\wh r_\infty\in\{0,1,\ldots\}\cup\{\infty\}$,  satisfy  $\wh r_2+\wh r_3+\wh r_\infty=\infty$ and $\wh r_\infty\geq1$. Then there is a Neumann subgroup $\wh S<\wh M$ such that $\wh S\setminus M\neq\emptyset$ and $\wh S$ has $(\wh r_2,\wh r_3,\wh r_\infty)$-structure.
\end{theorem}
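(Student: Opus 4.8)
The plan is to reduce the realization problem to the combinatorial construction of an involution $\iota$ of $Z$ together with a compatible sign sequence $(\delta_n)_{n\in Z}$, and then to assemble $\iota$ out of elementary generating involutions. By Theorem~\ref{Nsym} and Proposition~\ref{prez}, every such pair $(\iota,\delta)$ satisfying (\ref{eqiota}) defines a conjugacy class of Neumann subgroups of $\wh{\mc M}$ with presentation (\ref{prN}). Reading off the structure via Theorem~\ref{NnN}, the three parameters are controlled as follows: an order-two free factor arises exactly from a fixed point $\iota(n)=n$, which forces $\delta_n=1$ (otherwise $\Sigma_n$ would be an involution of negative determinant, impossible since $\wh{\mc S}$ is anisotropy by Corollary~\ref{np}); an order-three free factor arises exactly from an adjacent transposition $\iota(n)=n-1$ with $\delta_n=1$; and the free rank equals the Betti number $\tilde\beta$ of $\wt\Gamma_0$. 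Finally, $\wh{\mc S}$ is not contained in $\mc M$ precisely when $\Gamma(\wh{\mc S})$ is connected, which by the proof of Theorem~\ref{Ncg} holds iff $\delta_n=-1$ for some $n$. Thus it suffices to produce $(\iota,\delta)$ with exactly $\wh r_2$ fixed points, exactly $\wh r_3$ adjacent transpositions, Betti number $\tilde\beta=\wh r_\infty$, and at least one negative sign.

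Next I would isolate three elementary generating involutions: the singleton block on $\{k\}$ with $\iota(k)=k$, contributing one order-two factor; the adjacent transposition on $\{k,k+1\}$, contributing one order-three factor; and a short crossing block whose core is a cuboidal graph of Betti number one, contributing one extra unit to $\tilde\beta$. In each case one checks directly that (\ref{eqiota}) holds on the interior of the block for the prescribed local choice of signs, and that the induced labeling of $\wt\Gamma_0$ yields exactly the claimed generator. These blocks are glued by the operation $\sqcup$, which wraps the accumulated diagram inside one new nesting pair; because a nesting pair crosses no earlier pair, it creates no new cycle and merely extends the attached tree, so it leaves the Betti count undisturbed.

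The assembly then proceeds by cases according to which of $\wh r_2,\wh r_3,\wh r_\infty$ is infinite (at least one must be, since the sum is infinite). Using the limit construction (\ref{siota}) I would choose the sequence $(\iota_n)$ so that the three block types occur with the prescribed multiplicities: the finitely many blocks of a finite type are placed in a bounded core, while the infinitely many blocks of the infinite type are distributed along the two one-sided tails so that the domains exhaust $Z$ and $\delta_{k_n}=1$ at every seam. A single negative sign is inserted in the core, folding the two $C$-orbits together under $AV$; this both forces connectedness (hence $\wh{\mc S}\not\subset\mc M$) and supplies the unavoidable cycle guaranteed by Corollary~\ref{betti}. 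When $\wh r_\infty$ is finite, Corollary~\ref{postac} prescribes that $\wt\Gamma_0$ be a finite cuboidal core with exactly one attached infinite tree, so I place enough crossing blocks to make the core have Betti number $\wh r_\infty$ and let the infinitely many order-two and order-three blocks form the tree; when $\wh r_\infty=\infty$, the core is taken cuboidal with no infinite tree and the crossing blocks are laid out along the whole line.

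The main obstacle is the exact control of $\tilde\beta=\wh r_\infty$. I must verify that the crossing blocks contribute independent, non-vanishing cycles that survive the passage from $\ol\Gamma$ to $\ol\Gamma_0$ (removal of the involution-loops and of the valence-one vertices coming from $\wh L_0$) and then to $\wt\Gamma_0$, and that the nesting pairs of $\sqcup$ together with the order-two and order-three blocks create no accidental cycles. This is precisely the bookkeeping that Corollaries~\ref{betti} and~\ref{postac} are designed to support; combined with the dictionary of Theorem~\ref{NnN} converting graph data back into free-product structure, it yields a Neumann subgroup $\wh{\mc S}\not\subset\mc M$ of the prescribed $(\wh r_2,\wh r_3,\wh r_\infty)$-structure, completing the proof.
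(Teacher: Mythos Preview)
Your proposal follows essentially the same construction as the paper: assemble $(\iota,\delta)$ from three elementary generating involutions via the $\sqcup$ operation (\ref{siota}), with singleton, adjacent-transposition, and a short ``crossing'' block supplying the factors of order $2$, $3$, and $\infty$ respectively. The paper makes the third block explicit --- the length-$4$ nested pair $\iota(k)=k+3$, $\iota(k+1)=k+2$ with $\delta_k=1$, $\delta_{k+1}=-1$ --- which is what you are calling the crossing block; you should state it rather than describe it only by its intended graph-theoretic effect.

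The one genuine difference is in the verification that the assembled group has exactly the prescribed structure. You propose to read this off from the Betti number $\tilde\beta$ of $\wt\Gamma_0$ via Theorem~\ref{NnN}(1)(c), checking that each $\sqcup$-wrap extends the attached tree without creating a cycle and that each crossing block contributes exactly one independent cycle surviving to $\wt\Gamma_0$. The paper instead works purely with the presentation (\ref{prN}): for each block it lists the relations $R(j)$, $I(j)$ and eliminates redundant generators directly, and then shows by induction that the three new relations $R'(1),R'(2),R'(3)$ and the single new $I'$ introduced by each $\sqcup$ all collapse, so the wrap contributes no new independent generator. Your graph-theoretic route is legitimate and arguably more conceptual, but the step you flag as ``the main obstacle'' --- that the nesting pair lands on the tree part of $\wt\Gamma_0$ --- still has to be carried out; the paper's algebraic elimination is precisely that computation in disguise, and is short enough that you may find it cleaner than the graph bookkeeping.
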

\begin{proof}
First we define the three generating involutions  with some properties to describe in a moment and the terms of the required sequence of generating involutions will be taken from among those three ones. We require that having chosen a finite sequence of the generating involutions  the next one to be choose delivers some generators  independent of generators brought by previously chosen involutions. In that way we assure that the constructed group will be a free product. Then we have to choose the three generating involutions such that each of them brings appropriate independent generators. We decide to take the following simplest generating involutions (all are denoted by the same symbol $\iota$):
\begin{enumerate}
\item $\iota:\{k\}\longrightarrow\{k\}:\;\iota(k)=k$, $\delta_k=1$;\\ -- delivers a generator of order $2$;\vspace{2mm}
\item $\iota:\{k,k+1\}\longrightarrow\{k,k+1\}:\;\iota(k)=k+1$, $\delta_k=1$;\\ -- delivers a generator of order $3$;\vspace{2mm}
\item $\iota:\{k,k+1,k+2,k+3\}\longrightarrow\{k,k+1,k+2,k+3\}:\;\iota(k)=k+3,\;\iota(k+1)=k+2$, $\delta_k=1$, $\delta_{k+1}=-1$; \\
    -- delivers a generator of infinite order (with the determinant $-1$).
\end{enumerate}
In every case but 1. we have relations
$$\mbox{R($j$)}:\;
\sigma_{k+j}=\sigma_{k+j+1}\sigma_{\iota(k+j+1)+\delta_{k+j+1}},\;\;\;j=0,\ldots,l-1$$
and
$$I(j):\;\sigma_{k+j}\sigma_{\iota(k+j)}=1,\;\;\;j=0,\ldots,l$$
with appropriately taken $l$.

First we show that in each case the generating involution delivers some number of independent generators.\\
\textbf{Case 1.} We have just one generator of order $2$.\\
\textbf{Case 2.} We have $l=1$ and we can drop $I(0)\equiv I(1)$ and the generator $\sigma_{k+1}$. After substituting $\sigma_k^{-1}$ instead $\sigma_{k+1}$ in $R(0)$ we are left with just one generator $\sigma_k$ of order $3$.\\
\textbf{Case 3.} We have $l=3$. Obviously we can drop all the relations $I(0)\equiv I(3)$, $I(1)\equiv I(2)$ and then we can drop the generators $\sigma_{\iota(k+j)}=\sigma_{k+j}^{-1}$ for $j=0,1$. After substituting the left generators into the relations $R(j)$ we see that $R(0)\equiv R(1) \equiv R(2)$, thus we are left with relations $R(0)\equiv\sigma_k=\sigma_{k+1}^2$. Obviously we can drop this relation and   be left with just one generator of infinite order.

We have finished the first step of induction. For each of the three chosen generating involutions we have $det \sigma_k=1$.

Now assume that we have taken  the generating involutions $\iota_i$, $i=0,\ldots, n+1$ from our list and that the involution
$$\iota_0\sqcup\iota_1\sqcup\iota_2\sqcup\ldots\sqcup\iota_{n}$$
brings some number of independent generators. We  have  two new generators $\sigma_{-n-2}$ and $\sigma_{k_{n+1}+l_{n+1}+1}$. Recall that according to our construction both of them have the determinant equal to $1$.  We have to check if the involution $\iota_0\sqcup\iota_1\sqcup\iota_2\sqcup\ldots\sqcup\iota_{n+1}$ have any other independent generators but these delivered by $\iota_0\sqcup\iota_1\sqcup\iota_2\sqcup\ldots\sqcup\iota_{n}$  and $\iota_{n+1}$. The following new relations appear:
$$\begin{array}{ll}
R'(1): & \sigma_{-n-2}=\sigma_{-n-1}\sigma_{k_n+l_n+2},\\&\\
  R'(2):& \sigma_{k_n+l_n+1}=\sigma_{k_n+l_n+2}\sigma_{k_{n+1}+l_{n+1}+1},\\&\\
  R'(3):& \sigma_{k_{n+1}+l_{n+1}}=\sigma_{k_{n+1}+l_{n+1}+1}\sigma_{-n-1}
  \end{array}$$
and $$I':\;\;\sigma_{-n-2}\sigma_{k_{n+1}+l_{n+1}+1}=1.$$
We can drop the generator $\sigma_{k_{n+1}+l_{n+1}+1}=\sigma_{-n-2}^{-1}$ and the relation $I'$. If we substitute the above to the relations $R'(2)$ and $R'(3)$ and use the relations $\sigma_{k_n+l_n+1}\sigma_{-n-1}=1$ and $\sigma_{k_{n+1}+l_{n+1}}\sigma_{k_n+l_n+2}=1$ then we get $R'(1)\equiv R'(2)\equiv R'(3)$. Now we can drop the generator $\sigma_{-n-2}$ and the relation $R'(1)$.
\end{proof}

%%===========================================================================================%%
%% If you are submitting to one of the Nature Portfolio journals, using the eJP submission   %%
%% system, please include the references within the manuscript file itself. You may do this  %%
%% by copying the reference list from your .bbl file, paste it into the main manuscript .tex %%
%% file, and delete the associated \verb+\bibliography+ commands.                            %%
%%===========================================================================================%%

%\bibliography{sn-bibliography}% common bib file

\begin{thebibliography}{999}

\bibitem{BE} Bergelson V., Gorodnik A., Weakly mixing group actions: a brief survey and an example, Modern dynamical systems and applications, Cambridge Univ. Press, Cambridge,  pp. 3–25  (2004) or 
https://doi.org/10.48550/arXiv.math/0505025 
\bibitem{BL1} Brenner, J.L., Lyndon, R.C., Nonparabolic Subgroups of
the Modular Group. J. Algebra \textbf{77}, 311-322 (1982)
\bibitem{BL2} ------ ,  Maximal Nonparabolic Subgroups of
the Modular Group. Math. Ann. \textbf{263}, 1-11 (1983)
\bibitem{Ha} Hall Jr., M., Subgroups of free produts. Pac. J. Math. \textbf{3}, 115-120 (1953)
\bibitem{He} Herzer, A., Chain geometries. In: Buekenhout, F. (ed.) Handbook of Incidence Geometry, pp. 781–842, Amsterdam, Elsevier (1995)
\bibitem{J1} Jones, G.A., Maximal subgroups of
the modular and other groups, J. Group Theory \textbf{22} , 277 –296 (2019)
\bibitem{J2} ------ , Realisation of groups as automorphism groups in permutational categories, Ars Mathematica Contemporanea, Vol. \textbf{21}, No. 1, pp. 22 (2021)
\bibitem{KS} Karrass A., Solitar D.: The subgroups of a free product of two groups with an amalgamated subgroup. Trans Am Math Soc \textbf{150}, pp. 227-255 (1970)
\bibitem{K1}  Kulkarni,R.S., Geometry of Neumann subgroups, J. Austral. Math. Soc. Ser. A \textbf{47}, 350–367, doi:10.1017/s1446788700033085 (1989)
\bibitem{K2} ------ ,  An Arithmetic-Geometric Method in the Study of the Subgroups of the Modular Group, American Journal of Mathematics, Vol. \textbf{113}, No. 6,  pp. 1053-1133 (1991)
\bibitem{Ma} Magnus, W., Noneuclidean tesselations and their groups. London, New York: Academic Press
1974
\bibitem{MD} Mason, A.W., Dickson, N.K., On maximal nonparaboic normal subgrouos of the modular group, Procedings of the American Mathematical Society, Volume \textbf{7}, Number 2, pp. 167-173 (1979)
%\bibitem{MS1} Matra\'s, A.,  Siemaszko, A.: The shortest path problem for the distant graph of the projective line over the ring of integers. Bull. Malays. Math. Sci.  Soc. \textbf{41}, 231-248 (2018)
\bibitem{MS1} Matra\'s, A.,  Siemaszko, A., The Cayley property of some distant graphs and relationship with the Stern–-Brocot tree. Results  Math. \textbf{73}, pp. 14 (2018)
\bibitem{MS2} ------ , The distant graph of the ring of integers and its representations in the modular group. Results Math. \textbf{74}, no. 2, Art. 82 pp. 9 (2019)
\bibitem{MS3} ------ , The Cayley Graph of Neumann Subgroups. Results Math. \textbf{76}, no. 2, Art. 89,  pp. 13  (2021)
\bibitem{Mi} Millington, M.H., On cycloidal subgroups of the modular group. Proc. London Math. Soc. (3) \textbf{19}, pp. 164-176 (1969)
\bibitem{N} Neumann, B.H, \"Uber ein gruppentheoretisch-arithmetisches problem. Sitzungsber Preuss. Akad. Wiss. Math-Phys. \textbf{KL. 10} (1933)
\bibitem{Sc} Schreier, O., Die Untergruppen der freien Gruppen, Abhandlungen aus dem Mathematischen Seminar der Universität Hamburg. 5 (1): 161–183  (December 1927), doi:10.1007/BF02952517
\bibitem{S} Stothers, W.W, Subgroups of the modular group. Proc. Cambridge Philos. Soc. \textbf{75}, pp. 139-153 (1974)
\bibitem{T} Tretkoff, C., Nonparabolic subgroups of the modular group. Glasgow Mathematical Journal, \textbf{16}(2), pp. 91-102 (1975)1810-1821.


\end{thebibliography}
%% if required, the content of .bbl file can be included here once bbl is generated
%%\input sn-article.bbl

%% Default %%

%

\end{document}